\documentclass[reqno]{amsart}

\numberwithin{equation}{section}
\usepackage{latexsym}
\usepackage{amsmath}
\usepackage{amssymb}
\usepackage{mathrsfs}
\usepackage{graphicx,colordvi}
\usepackage{upgreek}
\usepackage{ifthen}
\usepackage{color}

\usepackage[T1]{fontenc}
\usepackage[latin1]{inputenc}

\setcounter{tocdepth}{2}
\numberwithin{equation}{section}

\newtheorem{defi}{Definition}[section]
\newtheorem{theorem}[defi]{Theorem}
\newtheorem{lemma}[defi]{Lemma}

\newtheorem{proposition}[defi]{Proposition}
\newtheorem{remark}[defi]{Remark}
\newtheorem{remarks}[defi]{Remarks}

\usepackage{latexsym}
\usepackage{amsmath}
\usepackage{amssymb}

\newcommand{\cE}{{\mathcal E}}
\newcommand{\cH}{{\mathcal H}}
\newcommand{\cB}{{\mathcal B}}

\newcommand{\C}{{\mathbb C}}
\newcommand{\CC}{{\mathbb C}}

\newcommand{\BB}{{\mathbb B}}
\newcommand{\EE}{{\mathbb E}}

\newcommand{\N}{{\mathbb N}}
\newcommand{\II}{{\mathbb I}}

\newcommand{\R}{{\mathbb R}}

\newcommand{\PP}{{\mathbb P}}

\providecommand{\norm}[1]{\left\vert #1 \right\vert}

\renewcommand{\epsilon}{\varepsilon}

\frenchspacing

\newcommand{\sa}{{\sf a}}
\newcommand{\bb}{{\bb}}
\newcommand{\eps}{\varepsilon}

\begin{document}

\title[Motion of a rigid body with a fluid-filled cavity]{A maximal regularity approach to the study of motion of a rigid body with a fluid-filled cavity}

\author{Giusy Mazzone}
\address{Department of Mathematics\\
        Vanderbilt University\\
        Nashville, Tennessee\\
        USA}
\email{giusy.mazzone@vanderbilt.edu}

\author{Jan Pr\"uss}
\address{Martin-Luther-Universit\"at Halle-Witten\-berg\\
         Institut f\"ur Mathematik \\
         Theodor-Lieser-Strasse 5\\
         D-06120 Halle, Germany}
\email{jan.pruess@mathematik.uni-halle.de}

\author{Gieri Simonett}
\address{Department of Mathematics\\
        Vanderbilt University\\
        Nashville, Tennessee\\
        USA}
\email{gieri.simonett@vanderbilt.edu}

\thanks{This work was supported by a grant from the Simons Foundation (\#426729, Gieri Simonett).}

\subjclass[2010]{Primary: 35Q35, 35Q30, 35B40, 35K58, 76D05}

 \keywords{ Normally stable, normally hyperbolic, global existence, critical spaces, fluid-solid interactions, rigid body motion.}

\begin{abstract}
We consider the inertial motion of a rigid body with an interior cavity that is completely filled with a viscous incompressible fluid.
The equilibria of the system are characterized and their stability properties are analyzed.
It is shown that equilibria associated with the largest moment of inertia are normally stable,
while all other equilibria are normally hyperbolic. 

We show that every Leray-Hopf weak solution
converges to an equilibrium at an exponential rate. 
In addition, we determine the critical spaces for the governing evolution equation, and we demonstrate how parabolic regularization in time-weighted spaces affords great flexibility in establishing regularity of solutions and their convergence to equilibria. 
\end{abstract}

\maketitle

\section{Introduction and formulation of the problem}
Consider the system $\mathcal S$ constituted by a rigid body with a hollow cavity $\Omega$ completely filled by a viscous incompressible fluid. We investigate the stability properties and long-time behavior of the whole system fluid-filled rigid body with respect to a coordinate system that is attached to
the center of mass $G$ of $\mathcal S$. In absence of external forces, the motion of the coupled system is driven by an initial angular momentum imparted on $\mathcal S$ ({\em inertial motion}).  

The problem of stability for physical systems like $\mathcal S$ has caught the attention of researchers in 
different fields of the applied sciences. Studies related to this problem can be found in the engineering literature 
concerning the dynamics of flight  (see e.g. \cite{Sc,Ka}), in structural mechanics 
(see e.g. \cite{SaTaTa,SaTaTa2,Taetal}), and space technology (\cite{Ibr,Ab}). 
Besides the interest for the applications, there have been numerous mathematical contributions which can be 
tracked back to the work by Stokes \cite{Sto}, 
Zhukovskii \cite{Zh}, Hough \cite{Ho}, Poincar\'e \cite{Po}, and Sobolev \cite{So}, mostly concerned with 
{\em ideal} fluids. In recent years, new stability results have been derived: they are obtained for suitable geometrical configurations of $\mathcal S$ and/or by linearizing the equations of motion (\cite{Ru2,Ru3,Ru4,Ch,Smi,Lya,KoShYu,MoRu,KoKr00}). In \cite{SiTa,Ma12},  the class of 
weak solutions {\em \`a la Leray-Hopf}, corresponding to initial data having finite total kinetic energy, has been proved to be nonempty. Moreover, in the Leray-Hopf class, weak solutions are in fact strong if small initial data are considered
(the smallness is in a suitable sense for both the initial relative velocity 
and the initial total angular momentum). In this class of strong solutions, the relative velocity of the 
fluid decays to zero, as $t\to \infty$, in the $L_2$-norm. Furthermore, in the case of {\em spherical 
mass symmetry}\footnote{In this case the inertia tensor of $\mathcal S$, calculated with respect to the center of mass G, is a multiple of the identity tensor. } of $\mathcal S,$ it has been proved that the decay of the fluid energy is, indeed, exponential (\cite{Ma12}).   Furthermore, the long-time behavior of $\mathcal S$ is characterized by a rigid body motion with constant angular velocity.

A more complete description of the long-time behavior of $\mathcal S$ has been given in \cite{GaMaZuCRM,DGMZ16,Ma16,Ga17}. It has been proved that 
each Leray-Hopf solution, as $t\to \infty$, must 
converge (in a proper topology) to a manifold $\{v \equiv 0\} \times \mathcal A$, where $\mathcal A$ is 
a compact, connected subset of $\R^3$ constituted by vectors (angular velocities) 
whose magnitude is compatible with conservation of total angular momentum. In particular, 
$\mathcal \{0\}\times \mathcal A\subset \mathcal E$, 
where $\mathcal E$ denotes the set of equilibria for the system fluid-filled rigid body $\mathcal S$. 
Let $\lambda_i$, $i = 1,2,3$,  be the central moments of 
inertia of $\mathcal S$ (i.e. the moments of inertia of $\mathcal S$ calculated with respect  to its center of mass $G$).
It has been demonstrated that if either $\lambda_1\le \lambda_2<\lambda_3$, 
or $\lambda_1=\lambda_2=\lambda_3$, 
then each weak solution converges to an equilibrium (\cite{GaMaZuCRM,DGMZ16,Ma16}). These results do not provide a rate for the convergence to an equilibrium, and the case $\lambda_1<\lambda_2=\lambda_3$ was left open. In these papers, also attainability and nonlinear stability of the equilibria has been studied. Recently, in \cite{Ga17}, a proof of convergence of Leray-Hopf solutions to a corresponding equilibrium also in the case $\lambda_1< \lambda_2=\lambda_3$ has been provided.
For the fluid velocity relative to the solid, exponential decay has been proved in the topology of $H^{2\alpha}_2$ 
(which coincides with the domain of fractional power of the Stokes operator). In the same functional setting,  asymptotic stability and instability properties are analyzed in \cite{Ga17}. Moreover, decay in stronger norms  is also obtained if additional conditions are imposed on the initial data. 

In the present paper, we provide a comprehensive study of the motion of fluid-filled rigid bodies in a different and general perspective. 
We show that equilibrium configurations corresponding to {\em permanent rotations}
\footnote{These are rigid body motions  around 
the principal axes of inertia, with constant angular velocity. 
For this motion, the fluid is at relative rest with respect to the solid. } of $\mathcal S$ around the central axes of inertia corresponding to the largest moment of inertia are asymptotically (actually, exponentially) stable. All other permanent rotations are unstable. These properties have been proved in \cite{Ga17} in an $L_2$-setting. Our stability results are  obtained in an $L_q$-framework, 
and as a byproduct of  the {\em generalized principle of linearized stability} introduced in \cite{PrSiZa09} (see also \cite[Chapter~5]{PrSi16}). The main ingredients that allow us to obtain such a result rely on the fact that the 
set of equilibria $\mathcal E$ forms a finite dimensional manifold, with dimension $m=1,2,3,$ depending on the mass distribution of $\mathcal S$ (see \eqref{equilibria2}). Moreover, a detailed study of the spectrum of the linearization  around a nontrivial equilibrium shows that equilibria associated with the largest moment of inertia are normally stable, while all other equilibria are normally hyperbolic (see Theorem \ref{th:spectrum}). 

The equations of motion given in \eqref{problem} form a coupled system of nonlinear parabolic PDEs and ODEs with bilinear nonlinearities. Using parabolic regularization in time-weighted spaces (\cite{PrSiWi18,PrSi16}), we show that, for any initial data having finite energy, all corresponding weak solutions will converge to an equilibrium at an exponential rate, in the topology of $H^{2\alpha}_q$ with $\alpha \in [0,1)$ and $q\in (1,3)$. The latter is shown by proving that, for each Leray-Hopf solution, there exists a time $\tau>0$ after which the solution becomes regular in suitable time-weighted spaces. 
We determine the critical spaces for the governing evolution equation for $\mathcal S$. The functional setting we use is that of maximal $L_p-L_q$ regularity in time-weighted $L_p$ spaces (\cite{PrSi16}). In Theorem \ref{th:local_strong}, we determine the largest space of initial data for which the equations of motion are well-posed. Finally, we use the important information (proved in \cite{DGMZ16}) that Leray-Hopf solutions become strong 
(in the $L_2$-setting for the fluid velocity and $C^1$ in time for the angular momentum), and moreover, the fluid relative velocity decays to zero, as $t\to\infty$, in the Sobolev $H^1_2$-norm. The above time $\tau$ can be thus found as the time when the Leray-Hopf solution has gained enough regularity, so that a point of its trajectory (corresponding to time $\tau$) can be considered as initial condition for a strong solution constructed in our Theorem~\ref{th:local_strong}. Well-posedness in $[\tau, \infty)$ then follows by choosing a suitable time-weight which ensures that weak and strong solutions coincide 
and trajectories are relatively compact. 

The approach taken in this paper makes systematic use of the theory of maximal regularity in time weighted spaces,
which has proven itself to be a powerful and flexible tool for the treatment of nonlinear parabolic equations, see for instance \cite{PrSi16, PrSiWi18}.  
The results contained in Proposition~\ref{prop:equilibria}(a)-(c) were already obtained in~\cite{GaMaZuCRM}, whereas the 
variational characterization in 
parts (d)-(e) of the proposition is new. Proposition~\ref{pro:L-calculus} and the well-posedness results in 
Theorem~\ref{th:local_strong} are also new, see Remarks~\ref{remark-2}. 
The characterization of the spectrum of $L_*$ given in Theorem~\ref{th:spectrum} parallels the results in \cite[Proposition~4.6]{Ga17}. However, we give a different proof which has more of a geometric flavor. In particular, the proof
of parts (d) and (e) adds geometric insight into the structure and the mechanism of occurrence of unstable eigenvalues. 
The assertions of Proposition~\ref{pro: stability-instability} are implicitly contained in the work of Prüss and Simonett, 
but appear here in print for the first time.
Finally, Theorem~5.2 and Theorem~\ref{th:convergence} are obtained as direct application of the generalized principle of linearized stability established in~\cite{PrSiZa09}.  Theorem~5.2 and Theorem~\ref{th:convergence} provide a generalization of results previously obtained in the $L_2$-framework (see \cite[Theorem~4.11 and Theorem~4.17, respectively]{Ga17}). 

Here is the plan of our paper. Below, we will provide the mathematical formulation of the problem. In Section \ref{se:energy and equilibria}, we will introduce the available energy of our system and characterize the set of equilibria.  
The well-posedness of the governing equations and the relevant critical spaces will be discussed in 
Section~\ref{se:local well-posedness and critical spaces}. In Section~\ref{se:spectrum}, we will analyze the spectrum of the linearization $L_*$ at an equilibrium, and provide a complete characterization of the nontrivial equilibria as either normally stable or normally hyperbolic. The stability properties of our equilibria, at the nonlinear level, is studied in Section \ref{sec:stability}.  We conclude the paper with Section \ref{se:long-time} in which we characterize the long-time behavior of $\mathcal S$. 

The equations governing the motion of the system fluid-filled rigid body $\mathcal S$ in a {\em non-inertial frame} with origin at $G$
(the center of mass of the whole system), and axes ${\sf e}_i$, $i=1,2,3$, directed along the principal axes of inertia of $\mathcal S$, read as follows (see \cite{DGMZ16,Ma12}): 
\begin{equation}
\label{problem}
\begin{aligned}
\partial_t v + v\cdot\nabla v +(\dot\sa -\dot\omega)\times x + 2(\sa-\omega)\times v
-\upnu \Delta v  +\nabla p & =0 &&\text{in} \;\; \Omega\times\R_+, \\
{\rm div}\, v &= 0 &&\text{in} \;\; \Omega\times\R_+, \\
v &=0 &&\text{on} \;\; \partial\Omega\times\R_+, \\
\II\, \dot\sa + (\sa-\omega)\times\II\sa &=0 &&\text{on}\;\; \R^3\times\R_+,\\
(v(0),\sa(0))= (v_0, &\sa_0)  &&\text{in} \;\; \Omega\times \R^3.\\
\end{aligned}
\end{equation}
Here  $\Omega\in \R^3$ is a bounded domain with boundary $\partial \Omega$ of class $C^{3}$, $v$ denotes the fluid velocity relative to the rigid body, $\upnu$ is its coefficient of kinematic viscosity, and $p$ the pressure field. 
Moreover, $\II={\rm diag}[\lambda_1,\lambda_2,\lambda_3]$ denotes the inertia tensor of $\mathcal S$ with respect to $G$, $\lambda_j$
 are the central moments of inertia of $\mathcal S$, $\II\sa $ is the total angular momentum of $\mathcal S$ with respect to $G$, and 
\begin{equation}
\label{def-omega}
\omega :=\II^{-1}\int_\Omega x \times v\,dx.
\end{equation}
Without loss of generality, we have set the fluid density $\varrho\equiv 1$. 
We note that the total angular momentum is a conserved quantity:
\begin{equation}
\label{momentum-conserved}
|\II \sa |= |\II \sf a_0|.
\end{equation}

Throughout this paper, we use the notation $(\cdot|\cdot)$ for the Euclidean inner product in $\R^3$ and $|\cdot|$ for the associated norm. Moreover, for a Banach space $X$, $|\cdot|_{X}$ will denote its norm, and  
$B_X(u_0,r)$  the open ball or radius $r$ centered at $u_0\in X$, with respect to the topology of $X$. 

For $q\in [1,\infty]$, $L_q(D)$ will identify the classical Lebesgue spaces on a domain $D\subset \R^n$, 
 $W^s_q(D)$ the (generalized) Sobolev spaces,   and $H^s_q(D)$ the Bessel potential spaces, for $s\in\R$.
In some of the  proofs (and when the context is clear), $|\cdot|_{D}$ will be frequently used for the $L_2$-norm on $D$. 
Furthermore, we recall the following characterization of Besov spaces 
$B^s_{qp}(D)=(H^{s_0}_q(D),H^{s_1}_q(D))_{\theta,p}$ as real interpolation of Bessel potential spaces, 
and $H^s_q(D)=[H^{s_0}_q(D),H^{s_1}_q(D)]_{\theta}$, 
with $[\cdot,\cdot]_\theta$ the complex interpolation,
valid for $s_0\ne s_1\in \R$, $p,q\in [1, \infty)$, $\theta\in (0,1)$, and  where $s=(1-\theta)s_0+\theta s_1$.
We also recall  that
$B^s_{qq}(D) = W^s_q(D)$  and $B^s_{22}(D) = W^s_2(D) =H^s_2(D)$. 

If $k\in \N$, $J\subset \R$ is an open interval and $X$ is a Banach space, then $C^k(J;X)$ (resp. $L_p(J;X)$  and $H^k_p(J;X)$) denote the space of all $k$-times continuously differentiable (resp. $L_p$- and $H^k_p$-) functions on $J$ with values in $X$.  For $1<q<\infty$, we denote by
$$L_{q,\sigma}(\Omega)=\{v\in L_q(\Omega): {\rm div}\, v=0\;\;\text{in}\;\;\Omega,\quad  (v|\nu)=0\;\;\text{on}\;\;\partial \Omega\}$$ the space of all solenoidal vector fields on $\Omega$ with zero normal component on $\partial \Omega$, and 
by  $\PP$  the Helmholtz projection of $L_q(\Omega)$ onto $L_{q,\sigma}(\Omega)$.
Finally, 
$${_0H}^s_{q,\sigma}(\Omega):=\{v\in H^s_q(\Omega)\cap L_{q,\sigma}(\Omega): v=0\;\;\text{on} \;\;\partial \Omega\}\quad\text{for } s>1/q.$$ 

\section{Energy and equilibria}\label{se:energy and equilibria}
In this section we provide a characterization of the equilibrium configurations.
We also show that
the critical points of the energy functional with prescribed total momentum are precisely the equilibria of the system.
Moreover, we show that if the energy with prescribed nonzero total momentum has a local minimum at 
a critical point $(0,\sa_*)$, then necessarily $\lambda_*=\max\{\lambda_1,\lambda_2,\lambda_3\}$,
corresponding  to the situation of  a stable equilibrium, see  Theorem~\ref{EV-problem}. 
\subsection{Dissipation of energy}
The available energy of system \eqref{problem} is given by
\begin{equation}
\label{energy}
{\sf E}={\sf E}(v,\sa):=\frac{1}{2}\big[ |v|_{L_2(\Omega)}^2 - (\II\omega |\omega) + (\II\sa|\sa)\big].
\end{equation}
The above functional is positive definite along the solutions 
to \eqref{problem} thanks to the following  result.

\begin{lemma}\label{lem:positive_def}
There exists a constant $c\in (0,1]$ such that 
\[
c|v|^2_{L_2(\Omega)}\le |v|^2_{L_2(\Omega)} - (\II\omega |\omega) \le |v|^2_{L_2(\Omega)},
\quad v\in L_2(\Omega).
\] 
\end{lemma}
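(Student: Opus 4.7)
The upper bound is immediate: since $\II=\mathrm{diag}[\lambda_1,\lambda_2,\lambda_3]$ is positive definite, $(\II\omega|\omega)\ge 0$. The interesting content is the lower bound, and the plan is to combine the defining identity for $\omega$ with a Cauchy--Schwarz estimate, and then exploit that the fluid's contribution to $\II$ is strictly dominated by $\II$ itself.

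First, using $\II\omega=\int_\Omega x\times v\,dx$ and the cyclic symmetry $(a\times b|c)=(c\times a|b)$ of the scalar triple product, I would rewrite
\[
(\II\omega|\omega) \,=\, \int_\Omega (\omega\times x\,|\,v)\,dx,
\]
and apply Cauchy--Schwarz in $L_2(\Omega;\R^3)$ to obtain
\[
(\II\omega|\omega) \,\le\, |v|_{L_2(\Omega)}\Big(\int_\Omega |\omega\times x|^2\,dx\Big)^{1/2}.
\]
Next, the Lagrange identity $|\omega\times x|^2=|\omega|^2|x|^2-(\omega|x)^2$ followed by integration rewrites the remaining integral as $(\II_F\omega|\omega)$, where
\[
\II_F \,:=\, \int_\Omega \big(|x|^2 I - x\otimes x\big)\,dx
\]
is the inertia tensor (with respect to $G$) of the fluid of unit density filling $\Omega$ and $I$ is the $3\times 3$ identity. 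The crucial observation is the decomposition $\II=\II_F+\II_B$, with $\II_B$ the inertia tensor (with respect to $G$) of the rigid body component of $\mathcal{S}$. Under the standing physical hypothesis that the rigid body has positive mass distributed in three dimensions, $\II_B$ is strictly positive definite, so $\II_F\le \gamma\II$ as quadratic forms for some $\gamma\in(0,1)$, e.g.\ the largest generalized eigenvalue of the pencil $(\II_F,\II)$. Feeding this back in, $(\II\omega|\omega)^2 \le \gamma\,(\II\omega|\omega)\,|v|^2_{L_2(\Omega)}$; dividing by $(\II\omega|\omega)$ when nonzero (the case $\omega=0$ is trivial) yields $(\II\omega|\omega)\le \gamma|v|^2_{L_2(\Omega)}$, and the lemma follows with $c=1-\gamma\in(0,1]$.

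The one place the argument could fail is the strict domination $\II_F<\II$; this is the main obstacle to a completely geometry-free proof. If $\II_B$ were only positive semi-definite (as would happen for a massless shell, or a body whose mass lies on a line through $G$), then $\gamma$ could reach $1$ and $c$ would degenerate to $0$, the extremal case being $v=\omega_0\times x$ with $\omega_0\in\ker\II_B$, which simultaneously saturates both Cauchy--Schwarz and the form comparison.
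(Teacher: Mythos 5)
Your proof is correct. The paper itself does not argue the lemma at all --- it simply cites \cite[Sections 7.2.2--7.2.4]{KoKr00} and \cite[Lemma 2.3.3]{Ma16} --- and what you have written is essentially a self-contained version of the standard argument found in those references: rewrite $(\II\omega|\omega)=\int_\Omega(\omega\times x\,|\,v)\,dx$ via the triple-product identity, apply Cauchy--Schwarz, identify $\int_\Omega|\omega\times x|^2\,dx=(\II_F\omega|\omega)$ with the fluid's inertia tensor through the Lagrange identity, and then use the additivity $\II=\II_F+\II_B$ together with strict positive definiteness of the body's inertia tensor $\II_B$ to obtain $\II_F\le\gamma\,\II$ with $\gamma\in(0,1)$, whence $(\II\omega|\omega)\le\gamma|v|^2_{L_2(\Omega)}$ and $c=1-\gamma$. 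The one hypothesis you flag --- that $\II_B$ is strictly positive definite --- is precisely the implicit standing assumption of the model (the container is a genuine rigid body whose mass is not concentrated on a line through $G$), and it is exactly the hypothesis under which the cited sources prove the estimate; so this is not a gap but the correct identification of where the constant comes from. Your description of the degenerate limit, $v=\omega_0\times x$ with $\omega_0\in\ker\II_B$, which saturates both Cauchy--Schwarz and the form comparison, is also accurate and shows that $c=1-\gamma$ is the natural constant produced by this argument.
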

\begin{proof}
We refer to  \cite[Sections  7.2.2--7.2.4]{KoKr00},
see also  \cite[Lemma 2.3.3 and the following remarks]{Ma16}. 
\end{proof}
Sufficiently smooth solutions to \eqref{problem} enjoy the following energy balance. 
\begin{lemma}\label{lem:energy_balance}
Consider $(v,\sa,p)$, with 
$v\in H^1_2((0,T);L_{2,\sigma}(\Omega))\cap L_2((0,T);   {_0H}^2_{2,\sigma}(\Omega))$, 
$\sa\in H^1_\infty((0,T))$ and $p\in L_2((0,T);H^1_2(\Omega))$,
 satisfying \eqref{problem} a.e. in $\Omega\times (0,T)$ for some $T\in (0,\infty]$. Then,  
$$ \frac{d}{dt}{\sf E}=-\upnu |\nabla v|^2_{L_2(\Omega)}\qquad \text{in }(0,T).$$
\end{lemma}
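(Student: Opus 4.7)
The plan is to differentiate ${\sf E}$ term by term, test the momentum equation in \eqref{problem} with $v$ in $L_2(\Omega)$, and then close everything with the ODE for $\sa$; the pleasant observation is that $\dot\omega$ need never be computed from the PDE. Since $\II$ is symmetric and time-independent, expanding \eqref{energy} gives
\begin{equation*}
\frac{d}{dt}{\sf E} = \int_\Omega v\cdot\partial_t v\,dx - (\II\omega|\dot\omega) + (\II\dot\sa|\sa).
\end{equation*}
The regularity assumed, together with $\partial\Omega\in C^3$, is enough to justify this differentiation and all the integrations by parts that follow for a.e.\ $t\in(0,T)$; in particular, $\omega = \II^{-1}\!\int_\Omega x\times v\,dx$ inherits $H^1_2$-regularity in time from $v\in H^1_2((0,T);L_{2,\sigma}(\Omega))$.

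Next I would take the $L_2(\Omega)$-inner product of the first equation in \eqref{problem} with $v$. Using ${\rm div}\,v=0$ together with $v|_{\partial\Omega}=0$, the convective term $\int_\Omega (v\cdot\nabla v)\cdot v\,dx = \frac{1}{2}\int_\Omega v\cdot\nabla|v|^2\,dx$ and the pressure term $\int_\Omega \nabla p\cdot v\,dx$ both vanish; the Coriolis-type term vanishes pointwise, since $[(\sa-\omega)\times v]\cdot v=0$; and integration by parts yields $\upnu\int_\Omega \Delta v\cdot v\,dx = -\upnu|\nabla v|^2_{L_2(\Omega)}$. For the remaining term I invoke the cyclic identity $(a\times x)\cdot v = a\cdot(x\times v)$ and the definition \eqref{def-omega} of $\omega$ to rewrite
\begin{equation*}
\int_\Omega [(\dot\sa-\dot\omega)\times x]\cdot v\,dx = (\dot\sa-\dot\omega)\cdot \!\int_\Omega x\times v\,dx = (\dot\sa-\dot\omega|\II\omega).
\end{equation*}
Hence $\int_\Omega v\cdot\partial_t v\,dx = -(\dot\sa-\dot\omega|\II\omega) - \upnu|\nabla v|^2_{L_2(\Omega)}$.

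Substituting this identity into the expression for $\frac{d}{dt}{\sf E}$, the contribution $(\dot\omega|\II\omega)$ produced by the test has opposite sign to the $-(\II\omega|\dot\omega)$ coming from the quadratic form $-\frac{1}{2}(\II\omega|\omega)$, and the two cancel; using the symmetry of $\II$ to rewrite $(\dot\sa|\II\omega) = (\II\dot\sa|\omega)$ then leaves
\begin{equation*}
\frac{d}{dt}{\sf E} = (\II\dot\sa|\sa-\omega) - \upnu|\nabla v|^2_{L_2(\Omega)}.
\end{equation*}
Finally, inserting the ODE $\II\dot\sa = -(\sa-\omega)\times\II\sa$ from \eqref{problem} and applying the triple product once more yields $(\II\dot\sa|\sa-\omega) = -\bigl([(\sa-\omega)\times\II\sa]\bigm|\sa-\omega\bigr) = 0$, which is exactly the claimed identity. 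There is no serious obstacle; the only point that needs care is the bookkeeping of the $\dot\omega$ terms, and the whole role of the symmetry of $\II$ in this computation is to make that bookkeeping trivial.
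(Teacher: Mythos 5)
Your proposal is correct and follows essentially the same route as the paper: differentiate ${\sf E}$, test the momentum equation with $v$ so that the convective, pressure and Coriolis terms drop out, convert the $(\dot\sa-\dot\omega)\times x$ term via $\int_\Omega x\times v\,dx=\II\omega$, cancel the $\dot\omega$ contributions, and finish with the ODE and the triple-product identity $((\sa-\omega)\times\II\sa\,|\,\sa-\omega)=0$. No gaps.
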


\begin{proof}
See for instance \cite [page 495]{DGMZ16}. 
For the reader's convenience, we will include the short proof here. 
From \eqref{energy} and \eqref{problem}, one has 
\begin{equation*}
\begin{aligned}
\frac{d}{dt}{\sf E}
& =(\partial_t v|v)_\Omega -(\II\dot\omega|\omega) + (\II\,\dot\sa |\sa) \\
& =\int_\Omega \big( (\upnu\Delta v-\nabla p|v) -(v|\nabla (v\otimes v)) 
    + ((\dot\omega -\dot\sa)  \times x|v) + 2 ((\omega -\sa)\times v|v)\big)\,dx\\
&\quad -(\II\dot\omega|\omega) +(\II\dot\sa|\sa)\\
& = -\upnu |\nabla v|^2_\Omega +(\II(\dot\omega -\dot\sa)|\omega) -(\II\dot\omega|\omega) +(\II\dot\sa|\sa)\\
& = -\upnu |\nabla v|^2_\Omega +(\II\dot\sa|\sa -\omega)
= -\upnu |\nabla v|^2_\Omega - ((\sa-\omega)\times \II\sa|\sa -\omega)
=-\upnu |\nabla v|^2_\Omega. 
\end{aligned}
\end{equation*}
\end{proof}
Next we show that the  energy is a {\em strict Lyapunov functional}, which means that the function 
$[t\mapsto {\sf E}(v(t),\sa(t))]$ is strictly decreasing along non-constant solutions.  
Suppose $\frac{d}{dt} {\sf E}=0$ on some interval $(t_1,t_2)$.
Then $\nabla v=0$ on $\Omega\times (t_1,t_2)$ and so $v=0$ on $\Omega\times (t_1,t_2)$ 
by Poincar\'e's inequality. This implies $\omega=0$ on $(t_1,t_2)$, and hence 
$$\dot\sa \times x +\nabla p=0\quad\text{on}\quad (t_1,t_2).$$
Taking the curl on both sides yields
${\rm curl}\, (\dot\sa\times x)=2\dot\sa=0,$
and hence we are at an equilibrium. 
\subsection{Equilibria}
 From \eqref{problem}, it can be shown that equilibrium configurations are characterized by having $v= 0$ and $\sa\in \R^3$ satisfying $\sa\times\II \sa=0$. 
Note that 
the latter implies that either $\sa = 0$ or $\sa$ and $\II\sa$ must be parallel, and therefore
$\sa\in {\sf N}(\lambda - \II)$ for some $\lambda\in\{\lambda_1,\lambda_2,\lambda_3\}.$ 
This yields  the set of nontrivial equilibria 
\begin{equation}
\label{equilibria1}
\cE= \{(0,\sa_\ast) : a_\ast\in {\sf N}(\lambda_\ast - \II),\; \lambda_\ast\in\{\lambda_1,\lambda_2,\lambda_3\}\},
\end{equation}
with constant pressure $p$ in each case.
There are three distinguished cases:
\begin{equation}
\label{equilibria2}
\begin{aligned}
\vspace{2mm}
{\bf (i)}   &\;\; \lambda_1=\lambda_2=\lambda_3:\; \cE=\{0\}\times \R^3,\;\; 
\\
{\bf (ii)}  &\;\; \lambda_i\neq \lambda_j=\lambda_k: \;
\cE=\bigcup\limits_{\ell=i,j}\{0\}\times {\sf N}(\lambda_\ell - \II), \;\; 
\\
{\bf (iii)} & \;\; \lambda_1\neq \lambda_2\neq\lambda_3:\; \cE=\bigcup\limits_{j=1}^3 \{0\}\times {\sf N}(\lambda_j -\II), \;\; 
\end{aligned}
\end{equation}
\subsection{Equilibria as critical points of the  energy}
In this subsection we determine the critical points $e=(v_*,\sa_*)$ of the energy ${\sf E}(v,\sa)$ under the constraint 
that the total angular momentum is conserved at all times 
$${\sf M(\sf a)}:=\frac{1}{2} (\II \sa| \II \sa)={\sf M}_0.$$
This provides a motivation for the stability conditions in Theorem~\ref{th:stability-instability}  by a variational analysis argument.
By the method of Langrange multipliers, there is $\mu\in \R$ such that at a critical point  $(v_*,\sa_*)$ we have
\begin{equation}
\label{Lagrange}
{\sf E}^\prime (v_*,\sa_\ast) + \mu {\sf M}^\prime (\sa_*)=0.
\end{equation}
The above Gateaux derivatives are given by 
\begin{equation*}
\begin{aligned}
&\langle {\sf E}^\prime (v,\sa)|({\sf w},{\sf b})\rangle 
=\int_\Omega (v|{\sf w})\,dx  
-\Big(\int_\Omega (x\times v)\,dx\Big| \II^{-1} \int_\Omega (x\times {\sf w})\,dx\Big) + (\II \sa |{\sf b} ), \\
&\langle {\sf M}^\prime  \sa | {\sf b}\rangle=(\II \sa | \II {\sf b}).
\end{aligned}
\end{equation*}
Choosing $({\sf w},{\sf b})=(v_*,0)$ in the Lagrange condition \eqref{Lagrange} yields
$$
\int_\Omega (v_*|v_*)\,dx  
-\Big(\int_\Omega (x\times v_*)\,dx\Big| \II^{-1} \int_\Omega (x\times v_*)\,dx\Big)=0,
$$
and  Lemma \ref{lem:positive_def} then implies $v_*=0$.
Setting ${\sf w}=0$ and varying ${\sf b}$ yields
$$((I +\mu\II) \sa_*|\II {\sf b})=0,\quad {\sf b}\in\R^3, $$
implying that either $\sa_*=0$, or $\sa_*$ is an eigenvector of $\II$ and $\mu=-1/\lambda_*$ an eigenvalue.
This shows that the critical points of ${\sf E}$ with prescribed total angular momentum are precisely the equilibria 
of \eqref{problem}.
\medskip\\
Suppose now that $e_*=(0,\sa_*)$ is a local minimum of the energy ${\sf E}$ with respect to the constraint 
${\sf M}={\sf M}_0\ne 0$, and $\mu=-1/\lambda_*$ is the corresponding Lagrange multiplier.
Then ${\sf C}:={\sf C}(e_*):=[{\sf E}^{\prime\prime}(e_*) + \mu {\sf M}^{\prime \prime}(e_*)]$ is positive semi-definite on the
kernel of ${\sf M}^{\prime}(\sa_*)$.
We note on the go that the kernel of ${\sf M}^{\prime}(\sa_*)$  coincides with
${\sf N}(\lambda_* -\II)^\perp,$ the orthogonal complement of the kernel of $(\lambda_*-\II)$.  
We have
\begin{equation*}
\langle {\sf C}({\sf w},{\sf b})| ({\sf w},{\sf b})\rangle
=|{\sf w}|^2_\Omega -\Big(\int_\Omega (x\times {\sf w})\,dx\Big| \II^{-1} \int_\Omega (x\times {\sf w})\,dx\Big) 
+ \frac{1}{\lambda_*}((\lambda_*-\II){\sf b}| \II{\sf b}),
\end{equation*}
By Lemma \ref{lem:positive_def},
\begin{equation}
\label{pos-def}
\langle {\sf C}({\sf w},{\sf b})| ({\sf w},{\sf b})\rangle \ge c |{\sf w}|^2_\Omega + \lambda_*^{-1} \varphi({\sf b},{\sf b}),
\end{equation}
with a constant $c\in (0,1)$, where $\varphi :\R^3\times \R^3\to\R$ is the quadratic form defined by
$\varphi({\sf c}|{\sf c}):=((\lambda_*-\II){\sf c}| \II{\sf c})$.
One readily verifies that
$\varphi|_{{\sf N}(\lambda_* -\II)^\perp \times {\sf N}(\lambda_* -\II)^\perp}=\varphi $.
From 
\eqref{pos-def} then follows that
${\sf C}$ is positive definite on the kernel of ${\sf M}^{\prime}(\sa_*)$  if and only if 
$$\lambda_*=\max\{\lambda_1,\lambda_2,\lambda_3\}.$$ 
Summarizing, we have shown the following result.
\begin{proposition}\label{prop:equilibria}
The following assertions hold for problem \eqref{problem}.
\begin{enumerate}
\setlength\itemsep{1mm}
\item[(a)] The total angular momentum is conserved.
\item[(b)] The  energy {\sf E}, defined in \eqref{energy}, is a strict Lyapunov functional.
\item[(c)] The set of nontrivial equilibria is given by \eqref{equilibria1}--\eqref{equilibria2}.
\item[(d)] The critical points of the energy with prescribed total momentum are precisely the equilibria of the system.
\item[(e)] If the energy with prescribed nonzero total momentum has a local minimum at a critical point $(0,\sa_*)$
then necessarily $\lambda_*=\max\{\lambda_1,\lambda_2,\lambda_3\}$.
\end{enumerate}
\end{proposition}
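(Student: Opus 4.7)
The plan is to dispatch parts (a)--(c) using calculations essentially already carried out in the preceding exposition, and to concentrate the new work on the Lagrange-multiplier arguments behind (d) and (e).

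For (a), I would test the angular-momentum ODE against $\II\sa$; since $((\sa-\omega)\times \II\sa\,|\,\II\sa)=0$, this gives $\frac{d}{dt}|\II\sa|^2=0$, which is \eqref{momentum-conserved}. Part (b) is Lemma~\ref{lem:energy_balance} together with the argument immediately following it: $\frac{d}{dt}{\sf E}\equiv 0$ on an interval forces $v\equiv 0$ by Poincar\'e, hence $\omega\equiv 0$, so the fluid equation reduces to $\dot\sa\times x+\nabla p=0$, and taking the curl produces $\dot\sa\equiv 0$, placing the trajectory at an equilibrium. Part (c) is the derivation of \eqref{equilibria1}--\eqref{equilibria2} already carried out in Section~\ref{se:energy and equilibria}, which follows from the fact that stationarity of \eqref{problem} requires $v=0$ and $\sa\times\II\sa=0$.

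For (d), I use a standard Lagrange-multiplier argument with the momentum functional ${\sf M}$. At a constrained critical point $(v_*,\sa_*)$ one has ${\sf E}'(v_*,\sa_*)+\mu{\sf M}'(\sa_*)=0$ for some $\mu\in\R$; testing against $({\sf w},{\sf b})=(v_*,0)$ kills the $\mu$-term entirely, and Lemma~\ref{lem:positive_def} then forces $v_*=0$. Testing next against $(0,{\sf b})$ for arbitrary ${\sf b}\in\R^3$ gives $((I+\mu\II)\sa_*\,|\,\II{\sf b})=0$ for all ${\sf b}$, hence $(I+\mu\II)\sa_*=0$ by invertibility of $\II$, so either $\sa_*=0$ or $\sa_*$ is an eigenvector of $\II$ with $\mu=-1/\lambda_*$. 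The converse direction, that every element of $\cE$ is a constrained critical point, is immediate from \eqref{equilibria1} with this same $\mu$.

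For (e), I invoke the second-order necessary condition: at a local minimum the bordered Hessian ${\sf C}={\sf E}''(e_*)+\mu{\sf M}''(e_*)$ must be positive semi-definite on a subspace of $\ker{\sf M}'(\sa_*)$ that contains $L_{2,\sigma}(\Omega)\times{\sf N}(\lambda_*-\II)^\perp$. Computing $\langle{\sf C}({\sf w},{\sf b})\,|\,({\sf w},{\sf b})\rangle$ explicitly and applying Lemma~\ref{lem:positive_def} to the fluid part yields the bound \eqref{pos-def}, and positive semi-definiteness then reduces to nonnegativity of the quadratic form $\varphi({\sf b},{\sf b})=((\lambda_*-\II){\sf b}\,|\,\II{\sf b})$ on ${\sf N}(\lambda_*-\II)^\perp$. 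Diagonalizing $\II$ in its principal-axis basis, one obtains $\varphi({\sf b},{\sf b})=\sum_{i:\lambda_i\ne\lambda_*}\lambda_i(\lambda_*-\lambda_i)b_i^2$ for such ${\sf b}$, and nonnegativity forces $\lambda_*\ge\lambda_i$ for every $i$, i.e.\ $\lambda_*=\max\{\lambda_1,\lambda_2,\lambda_3\}$. The step requiring the most care, and the one I flag as the main obstacle, is the bookkeeping of the constraint kernel on the finite-dimensional factor and the verification that $\varphi$ restricted to ${\sf N}(\lambda_*-\II)^\perp$ still has the principal-axis form displayed above; the spectral decomposition of the symmetric operator $\II$ resolves both points cleanly, but they are the geometric heart of the necessity argument.
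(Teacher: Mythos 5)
Your proposal is correct and follows essentially the same route as the paper: (a)--(c) are exactly the computations preceding the proposition (testing the momentum ODE, Lemma~\ref{lem:energy_balance} plus the Poincar\'e/curl argument, and the characterization $v=0$, $\sa\times\II\sa=0$), and (d)--(e) reproduce the paper's Lagrange-multiplier identity \eqref{Lagrange} with the test directions $(v_*,0)$ and $(0,{\sf b})$, followed by the second-order condition, the reduction via Lemma~\ref{lem:positive_def} to the quadratic form $\varphi({\sf b},{\sf b})=((\lambda_*-\II){\sf b}\,|\,\II{\sf b})$ on ${\sf N}(\lambda_*-\II)^\perp$, and its principal-axis evaluation forcing $\lambda_*=\max\{\lambda_1,\lambda_2,\lambda_3\}$. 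No gaps; the point you flag about restricting $\varphi$ to ${\sf N}(\lambda_*-\II)^\perp$ is precisely the spectral-decomposition remark the paper makes.
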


\section{Local well-posedness and critical spaces}\label{se:local well-posedness and critical spaces}
\noindent
In this section, we show that system \eqref{problem} is locally well-posed in an $L_q$-setting.
We consider the Banach spaces 
\begin{equation}
\label{X0-X1}
X_0:= L_{q,\sigma}(\Omega)\times \R^3,
\quad X_1:= {_0H}^2_{q,\sigma}(\Omega) \times \R^3,\quad 1<q<\infty.
\end{equation}
 Notice that $X_1$ is compactly embedded in $X_0$. 
Clearly,   \eqref{problem} can be rewritten in the following equivalent form 
\begin{equation}
\label{problem-2}
\begin{aligned}
\partial_t v + (\dot\sa -\dot\omega)\times x 
-\upnu \Delta v +\nabla p &=f(v,\sa) &&\text{in} && \Omega, \\
{\rm div}\, v &= 0 &&\text{in} && \Omega, \\
v &=0 &&\text{on} && \partial\Omega, \\
\II\, \dot\sa &=g(v,\sa) &&\text{on}&& \R^3,\\
(v(0),\sa(0)) &= (v_0,\sa_0) \\
\end{aligned}
\end{equation}
where  $f(v,\sa)=-v\cdot\nabla v-2(\sa-\omega)\times v$ and $g(v,\sa)=-(\sa -\omega)\times \II \sa.$ 
Let 
\begin{equation}
\label{def-E-A_0}
\begin{aligned}
E(v,\sa) &:=\big(v+\PP\big(x\times \II^{-1}\int_\Omega(x\times v)\,dx) - \PP (x\times \sa), \II\sa\big), \\
A(v,\sa) &:=\big(-\upnu\PP\Delta v , 0\big)
\end{aligned}
\end{equation}
for $u:=(v,\sa)\in {_0}H^2_{q,\sigma}(\Omega)\times\R^3$.
Then problem~\eqref{problem-2} can be formulated as a semilinear evolution equation
\begin{equation}\label{eq:evolution}
\frac{d}{dt}u +  Lu=F(u),\quad u(0)=u_0,  
\end{equation}
where $u_0:=(v_0,\sa_0)$, $L:=E^{-1}A$ and $F(u)=(F_1(u), F_2(u)):=E^{-1}(\PP f(v,\sa),g(v,\sa)),$
provided we know that $E$ is invertible.
Indeed, we have the following result.
\begin{proposition}
\label{pro:E-invertible}
 $E$ is invertible on $X_0$. The inverse is given by
 \begin{equation}\label{eq:E-1}
 E^{-1}=\left[\begin{array}{cc}
I+ C & (I+ C) \PP(x\times \II^{-1}\cdot )\\
      0 & \II^{-1}   
\end{array}\right],
 \end{equation}
where  $I$ is the identity operator on $L_{q,\sigma}(\Omega)$, and where
$C$ has the following properties:
\begin{enumerate}
\setlength\itemsep{1mm}
\item[{\bf (a)}]
 C is a compact (in fact a finite-rank) operator on $L_{q,\sigma}(\Omega)$. 
 \item[{\bf (b)}]
 $I+C$ is invertible on $L_{q,\sigma}(\Omega)$ and  positive definite on $L_{2,\sigma}(\Omega)$. 
\end{enumerate} 
 
\end{proposition}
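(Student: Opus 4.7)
The plan is to exploit the block upper-triangular structure of $E$ on $X_0=L_{q,\sigma}(\Omega)\times\R^3$. Setting
\[
Bv:=\PP\!\left(x\times\II^{-1}\int_\Omega x\times v\,dx\right),\quad v\in L_{q,\sigma}(\Omega),
\]
one has
\[
E=\begin{pmatrix} I+B & -\PP(x\times\,\cdot\,) \\ 0 & \II \end{pmatrix}.
\]
Since $\II$ is positive and diagonal, everything reduces to proving that $I+B$ is invertible on $L_{q,\sigma}(\Omega)$; once this is known, the standard block-inversion formula produces exactly the stated expression for $E^{-1}$, with $I+C:=(I+B)^{-1}$. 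Because the range of $B$ is contained in the three-dimensional subspace spanned by $\PP(x\times{\sf e}_j)$, $j=1,2,3$, the operator $B$ is finite-rank, and consequently so is $C=-(I+B)^{-1}B$; this will give part~(a).

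For the invertibility of $I+B$ I would first work in the Hilbert case $q=2$. Writing $\omega=\omega(v)$ as in \eqref{def-omega}, the self-adjointness of $\PP$ on $L_2$, the solenoidality of $v$, and the scalar triple-product identity yield
\[
(Bv|v)_{L_2(\Omega)}=\int_\Omega (x\times\omega\,|\,v)\,dx=-(\II\omega|\omega),
\]
so by Lemma~\ref{lem:positive_def}
\[
((I+B)v|v)_{L_2(\Omega)}=|v|_{L_2(\Omega)}^2-(\II\omega|\omega)\ge c\,|v|_{L_2(\Omega)}^2.
\]
The same triple-product calculation also shows, by polarization, that $B$ is self-adjoint on $L_{2,\sigma}(\Omega)$. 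Hence $I+B$ is boundedly invertible, self-adjoint, and positive definite on $L_{2,\sigma}(\Omega)$, and the positive-definiteness claim in part~(b) for $I+C=(I+B)^{-1}$ follows at once by passing to inverses.

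For general $q\in(1,\infty)$ I would invoke the Fredholm alternative: $B$ is compact on $L_{q,\sigma}(\Omega)$, so it suffices to show that $I+B$ is injective there. If $v\in L_{q,\sigma}(\Omega)$ satisfies $(I+B)v=0$, then $v=-Bv$ lies in the finite-dimensional range of $B$, which consists of smooth fields belonging in particular to $L_{2,\sigma}(\Omega)$; the $L_2$-injectivity just established then forces $v=0$. The main delicate step I anticipate is executing the triple-product identity cleanly, especially noting that $\PP$ may be discarded when pairing $\PP(x\times\omega)$ against a solenoidal $v$; once this is in hand, the bootstrap from arbitrary $L_q$ down to $L_2$ via the finite-dimensional range of $B$ is the mechanism that propagates $L_2$-positivity to every $L_q$-setting, and the remaining block-matrix computation is entirely formal.
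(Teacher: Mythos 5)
Your proposal is correct and follows essentially the same route as the paper: the same finite-rank operator (the paper's $K$, your $B$), the Fredholm reduction of invertibility on $L_{q,\sigma}(\Omega)$ to injectivity, the bootstrap to $L_{2,\sigma}(\Omega)$ via the smooth finite-dimensional range, the triple-product identity $((I+K)v|v)_{L_2(\Omega)}=|v|^2_{L_2(\Omega)}-(\II\omega|\omega)$ together with Lemma~\ref{lem:positive_def}, and finally $I+C:=(I+K)^{-1}$ with $C$ of finite rank. The only cosmetic differences are that you invoke the block upper-triangular inversion formula where the paper solves $E(v,\sa)=(f,{\sf b})$ directly, and you write $C=-(I+B)^{-1}B$ instead of the equivalent $C=-K(I+K)^{-1}$.
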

\begin{proof}
We first observe that 
$Kv:=\PP\big(x\times \II^{-1}\int_\Omega(x\times v)\,dx)$ defines a compact linear operator on $L_{q,\sigma}(\Omega)$.
Indeed, one readily verifies that
\begin{equation}
\label{finite-rank}
Kv=\PP\big(x\times \II^{-1}\int_\Omega(x\times v)\,dx)=\sum_{i=1}^3 \langle \ell_i| v\rangle \PP (x\times {\sf e}_i),
\end{equation}
where $\ell_i$ are bounded linear functionals on $L_{q,\sigma}(\Omega)$.  
This shows that $K$ has finite rank.
We claim that $(I+K)$ is invertible on $L_{q,\sigma}(\Omega)$.
As $(I+K)$ is a compact perturbation of the identity operator,
by Fredholm theory we are done if we can show that $(I+K)$ is injective.
  Suppose  $(I+K)v=0$ for some $v\in L_{q,\sigma}(\Omega)$.
The equation $$v+\PP\big(x\times \II^{-1}\int_\Omega(x\times v)\,dx)=0$$ 
shows that $v$ is smooth, and therefore lies in $ L_{2,\sigma}(\Omega)$.
Multiplying the above equation with $v$ and integrating over $\Omega$ yields
\begin{equation*}
0=((I+K)v|v)_\Omega = |v|^2_\Omega + (\PP (x\times \omega ) | v)_\Omega = |v|^2_\Omega + (x\times \omega |v)_\Omega
=  |v|^2_\Omega - (\II \omega |\omega).
\end{equation*}
By Lemma \ref{lem:positive_def}, $v=0$, and hence the claim is proved.

Is is now easy to see that the equation $E(v,\sa)=(f,{\sf b})$ has for each $(f,{\sf b})\in X_0$ a unique solution 
given by $(v,\sa)=((I+K)^{-1}(f +\PP(x\times \II^{-1}{\sf b}), \II^{-1}{\sf b})$.

We now set $I+C:=(I+K)^{-1}$. Clearly, $I+C$ is invertible, and the  identity $I+C=I - K(I+K)^{-1}$ shows that $C$ has finite rank.
Let $v,w\in L_{2,\sigma}(\Omega)$ be given. Using well-known properties of the cross-product,  \eqref{def-omega} and Lemma~\ref {lem:positive_def}
one verifies that $(Kv|w)_{L_2(\Omega)} =(v|Kw)_{L_2(\Omega)}$ 
and 
\begin{equation}
\label{I+K positive}
((I+K)v|v)_{L_2(\Omega)} 
=|v|^2_{L_2(\Omega)}-(\II\omega |\omega)\ge c|v|^2_{L_2(\Omega)},
\end{equation}
yielding the second assertion in (b). 
\end{proof}
\begin{remark}
\label{remark-1}
{\rm {\bf (a)}
The proof of Proposition~\ref{pro:E-invertible} shows that $C$ is given by
\begin{equation}
\label{C-structure}
C=-K(I+K)^{-1}=\sum_{i=1}^3 \langle L_i| \cdot \rangle \PP (x\times {\sf e}_i),
\end{equation}
where $L_i$ are bounded functionals on $L_{q,\sigma}(\Omega)$.
We note that the functions $\PP(x\times {\sf e}_i)$ are smooth (in fact harmonic).
However, they do not satisfy a Dirichlet boundary condition on $\partial\Omega$.
\medskip\\
\noindent
 {\bf (b)} If follows from Proposition~\ref{pro:E-invertible} that
\begin{equation}  
\label{eq:L}
L=\left[\begin{array}{cc}
-\upnu (I+ C)\PP\Delta  &0\\
      0 & 0
      \end{array}\right].
\end{equation} }
\end{remark}
It is well-known, \cite{Gi85, NoSa03}, that the Stokes operator $-\PP \Delta$ with Dirichlet boundary condition
is invertible and has a bounded $\cH^\infty$-calculus with $\cH^\infty$-angle zero on $L_{q,\sigma}(\Omega)$.
We shall show that $-\upnu (I+ C)\PP\Delta$ enjoys analogous properties.
\begin{proposition}
\label{pro:L-calculus}
The operator 
\begin{equation}\label{eq:A1}
A_1:=-\upnu (I+ C)\PP\Delta\quad \text{with} \quad {\sf D}(A_1)={_0H}^2_{q,\sigma}(\Omega)
\end{equation}
is invertible and $A_1 \in \cH^\infty(L_{q,\sigma}(\Omega))$ 
with $\cH^\infty$-angle $\phi^\infty_{A_1}<\pi/2$  on $L_{q,\sigma}(\Omega)$.
Moreover, $\eta+L\in \cH^\infty(X_0)$ 
with $\cH^\infty$-angle $\phi^\infty_{L}<\pi/2$ 
for each $\eta >0$.
\end{proposition}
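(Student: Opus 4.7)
\emph{Strategy and invertibility.} Let $A_0 := -\upnu\PP\Delta$ with $\mathsf{D}(A_0)={_0H}^2_{q,\sigma}(\Omega)$ denote the Dirichlet Stokes operator, so that $A_1 = (I+C)A_0$. By the cited work of Giga and Noll-Saal, $A_0$ is invertible and admits a bounded $\cH^\infty$-calculus of angle $\phi^\infty_{A_0}=0$ on $L_{q,\sigma}(\Omega)$. Since $I+C$ is invertible by Proposition~\ref{pro:E-invertible}, the operator $A_1$ is invertible with $A_1^{-1}=A_0^{-1}(I+C)^{-1}$, and the task reduces to producing a bounded $\cH^\infty$-calculus for $A_1$ of angle strictly below $\pi/2$.

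\emph{Spectrum and the $\cH^\infty$-calculus.} Write the perturbation additively, $A_1 = A_0 + CA_0$; then $(A_1 - A_0)A_0^{-1} = C$ is compact (in fact finite-rank) on $L_{q,\sigma}(\Omega)$, so $CA_0$ is $A_0$-relatively compact with relative bound zero. To locate $\sigma(A_1)$ on the positive real axis, I would first exploit the Hilbert case $q=2$: by Proposition~\ref{pro:E-invertible}(b), $I+K = (I+C)^{-1}$ is positive self-adjoint on $L_{2,\sigma}(\Omega)$, so the equivalent inner product $\langle u,v\rangle_{\sim}:=((I+K)u|v)_{L_2(\Omega)}$ is well defined, and a short computation using self-adjointness of $A_0$ and of $I+K$ in $L_2$ gives $\langle A_1 u,v\rangle_{\sim} = (A_0 u|v)_{L_2(\Omega)} = \langle u, A_1 v\rangle_{\sim}$ for $u,v\in\mathsf{D}(A_0)$. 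Hence $A_1$ is positive and self-adjoint for $\langle\cdot,\cdot\rangle_{\sim}$, which yields $\sigma_{L_{2,\sigma}}(A_1) \subset (0,\infty)$. Because ${_0H}^2_{q,\sigma}(\Omega)\hookrightarrow L_{q,\sigma}(\Omega)$ compactly, $A_1$ has compact resolvent, and its eigenfunctions are smooth by elliptic regularity for the Stokes system, so the spectrum is $q$-independent and $\sigma(A_1)\subset(0,\infty)$ on $L_{q,\sigma}(\Omega)$ as well. Invoking the Kalton--Weis perturbation theorem for relatively compact perturbations of operators with bounded $\cH^\infty$-calculus (see \cite[Chapter~3]{PrSi16}) then transfers the calculus from $A_0$ to $A_1 = A_0 + CA_0$, and the spectral information pins the angle: $\phi^\infty_{A_1} < \pi/2$.

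\emph{The claim for $L$, and main obstacle.} By Remark~\ref{remark-1}(b), $L=\mathrm{diag}(A_1,0)$ on $X_0$, hence $\eta+L = \mathrm{diag}(\eta+A_1,\eta I_3)$; the second block, a positive multiple of the identity on $\R^3$, trivially has bounded $\cH^\infty$-calculus of angle $0$, and the direct-sum property yields $\eta+L\in\cH^\infty(X_0)$ with angle $<\pi/2$ for every $\eta>0$. The hard part is the transfer of the $\cH^\infty$-calculus from $A_0$ to $A_1 = (I+C)A_0$: because this is a \emph{multiplicative}, not a similarity, modification, direct conjugation arguments are unavailable, and one must combine the finite-rank structure of $C$---which supplies the $A_0$-compactness of the additive perturbation $CA_0$---with the Hilbert-space realization of the spectrum to ensure the $\cH^\infty$-angle remains strictly below $\pi/2$.
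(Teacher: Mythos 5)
Your invertibility argument, the localization of the spectrum of $A_1$ via the equivalent inner product $((I+K)\cdot|\cdot)_{L_2}$ (essentially the same computation as the paper's, phrased as self-adjointness rather than as an eigenvalue identity), and the reduction of the statement for $\eta+L$ to the block $\eta+A_1$ are all fine. The genuine gap is the step you yourself flag as "the hard part": you transfer the bounded $\cH^\infty$-calculus from $A_0=-\upnu\PP\Delta$ to $A_1=A_0+CA_0$ by invoking a "Kalton--Weis perturbation theorem for relatively compact perturbations." No such theorem is available. The Kalton--Weis theorem concerns sums of \emph{resolvent-commuting} operators ($A$ with bounded $\cH^\infty$-calculus plus an $\mathcal R$-sectorial $B$ with angles adding to less than $\pi$), which is not the situation here; and relative compactness of a perturbation only yields sectoriality of the perturbed operator after a shift (this is what \cite[Lemma 3.1.7, Corollary 3.1.6]{PrSi16} give, and what the paper uses for the sectoriality part) --- it does \emph{not} preserve boundedness of the $\cH^\infty$-calculus. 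Likewise, the remark that "the spectral information pins the angle" is not a valid mechanism: knowing $\sigma(A_1)\subset(0,\infty)$ locates the spectrum but gives no control on the $\cH^\infty$-angle, nor even on the existence of a bounded $\cH^\infty$-calculus.

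What is actually needed, and what the paper exploits, is that the perturbation ${\sf B}:=-C\PP\Delta$ is not just relatively compact but \emph{smoothing in the image}: by the explicit finite-rank representation \eqref{C-structure}, the range of $C$ is spanned by the smooth (harmonic) fields $\PP(x\times{\sf e}_i)$, so ${\sf B}$ maps ${_0H}^2_{q,\sigma}(\Omega)$ boundedly into $H^s_{q,\sigma}(\Omega)$ for some $s\in(0,1/q)$ (the restriction $s<1/q$ is essential because $\PP(x\times{\sf e}_i)$ does not satisfy the Dirichlet condition), and for such $s$ one has $H^s_{q,\sigma}(\Omega)=[L_{q,\sigma}(\Omega),{_0H}^2_{q,\sigma}(\Omega)]_{s/2}={\sf D}({\sf A}^{s/2})$ with ${\sf A}=-\PP\Delta$. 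Thus ${\sf B}:{\sf D}({\sf A})\to{\sf D}({\sf A}^{\alpha})$, $\alpha=s/2>0$, and this higher-order-from-above structure, combined with the invertibility and sectoriality of ${\sf A}+{\sf B}$ already established, is what allows one to run the perturbation argument of \cite[Proposition 3.3.9]{PrSi16} and conclude $A_1\in\cH^\infty(L_{q,\sigma}(\Omega))$ with $\phi^\infty_{A_1}<\pi/2$. So you have all the ingredients (finite rank of $C$, smoothness of its range, the spectral bound), but your proposal uses the finite-rank structure only to get relative compactness, which is too weak; without the ${\sf D}({\sf A}^\alpha)$-valued estimate on ${\sf B}$ the calculus transfer does not go through.
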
 
\begin{proof} We may assume w.l.o.g that $\upnu=1$.
We shall first show that $A_1$ is invertible and sectorial, with sectorial angle $\phi_{A_1}<\pi/2$.
As $A_1$ has compact resolvent,
its spectrum consists entirely of eigenvalues of finite algebraic multiplicity.
Moreover, the spectrum is independent of $q$.
Suppose that $(\mu +A_1)v=0$ for some $\mu\in\C$ and $v\in {_0H}^2_{2,\sigma}(\Omega)$.
Applying $(I+C)^{-1}=(I+K)$ to this equation and then taking the $L_2(\Omega)$ inner product with $v$ results in
$
\mu((I+K)v|v)_\Omega - (\PP \Delta v|v)_\Omega = 0.
$
We conclude that $\mu\in (-\infty,\mu_1]$ with $\mu_1<0$, as both $(I+K)$ and $-\PP\Delta$ are positive definite.
In particular, $A_1$ is invertible.

Let $ {\sf A}+ {\sf B}:=-\PP\Delta -C\PP\Delta $  with domain ${_0H}^2_{q,\sigma}(\Omega)$. 
As ${\sf A}$ admits a bounded $\cH^\infty$-calculus with $\cH^\infty$-angle zero on $L_{q,\sigma}(\Omega)$ it is, in particular,
sectorial with angle $\phi_{\sf A}=0$, while $B$ is a relative compact perturbation. It follows
from \cite[Lemma 3.1.7 and Corollary 3.1.6]{PrSi16} that there is a positive number $\eta_0$ such that $\eta_0 +{\sf A} +{\sf B}$ is invertible and  sectorial with spectral angle less than $\pi/2$.
Combining this result with the fact that $\sigma(-A_1)\subset (-\infty,\mu_1]$ we infer that 
$A_1$ is sectorial with sectorial angle $\phi_{A_1}<\pi/2$ as well.

Next we infer from Remark~\ref{remark-1}(a) that 
${\sf B}$ maps ${_0H}^2_{q,\sigma}(\Omega)$ into  $C^\infty(\bar\Omega)$. 
In particular, there is $s\in (0,1/q)$ such that ${\sf B}\in \cB({_0H}^2_{q,\sigma}(\Omega),H^{s}_{q,\sigma}(\Omega)).$
Note that for $s\in (0,1/q)$ we have
$H^s_{q,\sigma}(\Omega)=[L_{q,\sigma}(\Omega), {_0H}^2_{q,\sigma}(\Omega)]_{s/2} ={\sf D} (A^{s/2})$. 
Hence $${\sf B}:{\sf D(A)}\to  {\sf D}({\sf A}^\alpha),$$ with $\alpha=s/2$, is bounded.
Observing that ${\sf A}+{\sf B}$ is invertible and sectorial, 
we can now follow the proof of \cite[Proposition 3.3.9]{PrSi16} to infer that 
${\sf A} +{\sf B}\in \cH^\infty(L_{q,\sigma}(\Omega))$ 
with $ \cH^\infty$-angle $\phi^\infty_{{\sf A} +{\sf B}}<\pi/2$.

The assertion for $\eta +L={\rm diag}\,[\eta+A_1, \eta ]$ is easy to verify, 
see for instance \cite[Proposition 3.3.14(iv)]{PrSi16} for  $\eta +A_1\in \cH^{\infty}(L_{q,\sigma}(\Omega))$.
\end{proof}
 We are now ready to state and prove a well-posedness result. 
 Well-posedness will be established in the following {\em time-weighted spaces}
\begin{equation}
\label{E1-mu}
\EE_{1,\mu}(0,T):=H^1_{p,\mu}((0,T);X_0)\cap L_{p,\mu}((0,T);X_1),
\end{equation}
where, for $T\in (0,\infty)$, $1/p<\mu\le 1$, and $X$ a Banach space,
\begin{equation*}
\begin{aligned}
&u\in L_{p,\mu}((0,T);X) && \Leftrightarrow \quad  t^{1-\mu}u\in L_p((0,T);X),\\
&u\in H^1_{p,\mu}((0,T);X) && \Leftrightarrow \quad u,\dot u\in L_{p,\mu}((0,T);X).
 \end{aligned}
\end{equation*}
In the following we set 
\begin{equation*}
{_0B}^{s}_{qp,\sigma}(\Omega):=
\left\{ 
\begin{aligned}
&\{u\in  B^{s}_{qp}(\Omega)\cap L_{q,\sigma}(\Omega): u=0\; \text{on}\; \partial\Omega\}, &&s>1/q,\\
&B^{s}_{qp}(\Omega)\cap L_{q,\sigma}(\Omega), && s\in [0,1/q).\\
\end{aligned}
\right.
\end{equation*}
\begin{theorem}\label{th:local_strong}
Suppose
\begin{equation}
\label{assumptions-pq}
p\in(1,\infty),\quad q\in (1,3),\quad  2/p +3/q\le 3,
\end{equation}
and let $\mu$ satisfy
\begin{equation}
\label{assumptions-mu}
\mu\in (1/p,1],\quad  \mu\ge \mu_{\rm crit}=\frac{1}{p} + \frac{3}{2q}-\frac{1}{2}. 
\end{equation}
\begin{enumerate}
\setlength\itemsep{1mm}
\item[{\bf (a)}]
Let  $u_0=(v_0,\sa_0)\in {_0B}^{2\mu-2/p}_{qp,\sigma}(\Omega)\times \R^3=X_{\gamma,\mu}$ be given.
Then there are positive constants $T=T(u_0)$ and $\eta=\eta(u_0)$ such that
\eqref{eq:evolution} admits a unique solution $u(\cdot, u_0)=(v,{\sf a})$ in 
 \begin{equation*}
 \qquad
 {\EE_{1,\mu}(0,T)}=H^1_{p,\mu}((0,T); L_{q,\sigma}(\Omega)\times\R^3)
 \cap L_{p,\mu}((0,T); {_0H}^2_{q,\sigma}(\Omega)\times\R^3)
 \end{equation*}
 for any initial value $u_1=(v_1,\sa_1)\in B_{X_{\gamma,\mu}}(u_0,\eta)$. 
 Furthermore, there exists a positive constant $c=c(u_0)$ such that 
\begin{equation}
\label{eq:continuous_dep}
\|u(\cdot, u_1)-u(\cdot,u_2)\|_{\EE_{1,\mu}(0,T)}\le c|u_1-u_2|_{X_{\gamma,\mu}}
\end{equation}
for all $u_i=(v_i, \sa_i)\in B_{X_{\gamma,\mu}}(u_0,\eta)$, $i=1,2$. 
\item[{\bf(b)}]
Suppose $p_j, q_j$, $\mu_j$ satisfy \eqref{assumptions-pq}-\eqref{assumptions-mu}  and, in addition, $p_1\leq p_2$, $q_1\leq q_2$ as well as
\begin{equation}\label{mu-j}
 \mu_1- \frac{1}{p_1}- \frac{3}{2q_1} \ge  \mu_2- \frac{1}{p_2}- \frac{3}{2q_2}.
 \end{equation}
Then for each initial value
$(v_0,{\sf a}_0)\in {_0B}^{2\mu_1 -2/p_1}_{q_1 p_1,\sigma}(\Omega)\times \R^3,$ 
problem  \eqref{eq:evolution} admits a unique solution $(v,{\sf a})$ in the class
\begin{equation*}
\begin{split}
&H^1_{p_1,\mu_1}((0,T); L_{q_1,\sigma}(\Omega)\times\R^3)\cap L_{p_1,\mu_1}((0,T); {_0H}^2_{q_1,\sigma}(\Omega)\times\R^3) \\
&\cap H^1_{p_2,\mu_2}((0,T); L_{q_2,\sigma}(\Omega)\times\R^3)\cap L_{p_2,\mu_2}((0,T); {_0H}^2_{q_2,\sigma}(\Omega)\times\R^3).
\end{split}
\end{equation*}
\item[{\bf (c)}]
Each solution with initial value in  $(v_0,\sa_0)\in {_0B}^{2\mu-2/p}_{qp,\sigma}(\Omega)\times \R^3$
exists on a maximal interval  $[0,t_+)=[0,t_+(v_0,\sa_0))$, and 
enjoys the additional regularity property
\begin{equation*}
\qquad \qquad v \in C([0,t_+); {_0B}^{2\mu-2/p}_{qp,\sigma}(\Omega))\cap C((0,t_+);{_0B}^{2-2/p}_{qp,\sigma}(\Omega)),
\;\; \sa\in C^1([0,t_+),\R^3).
\end{equation*}
Hence, $v$ regularizes instantaneously if $\mu<1$.
\item[{\bf (d)}]
The solution $u=(v,{\sa})$ exists globally if
 $u([0,t_+))\subset B^{2\mu-2/p}_{qp}(\Omega)\times \R^3$ is relatively compact. 
\end{enumerate}
\end{theorem}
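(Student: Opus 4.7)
\medskip
\noindent
\textbf{Plan of proof.} The strategy is to cast \eqref{eq:evolution} as a quasilinear (in fact semilinear) abstract Cauchy problem of the form $\dot u + Lu = F(u)$ on the pair $(X_0,X_1)$ and to invoke the maximal $L_{p,\mu}$-regularity machinery of \cite{PrSi16}. Proposition~\ref{pro:L-calculus} provides the main ingredient for the linear part: since $\eta + L \in \cH^\infty(X_0)$ with angle $<\pi/2$ for every $\eta>0$, the operator $L$ has maximal $L_{p,\mu}$-regularity in the time-weighted setting (by Dore's theorem, cf.\ \cite[Thm.~3.4.8]{PrSi16}), and the corresponding trace space at $t=0$ equals
\[
X_{\gamma,\mu}=(X_0,X_1)_{\mu-1/p,p}={_0B}^{2\mu-2/p}_{qp,\sigma}(\Omega)\times\R^3,
\]
the equality following from standard real interpolation of the Stokes scale on $C^3$ domains. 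Everything then reduces to estimating the nonlinearity $F=E^{-1}(\PP f,g)$ between the right function spaces.

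\medskip
\noindent
\textbf{(a) Local existence.} The map $F$ is a finite sum of bilinear terms in $u=(v,\sa)$: the convective term $v\cdot\nabla v$, the Coriolis-type term $(\sa-\omega)\times v$, and the ODE term $(\sa-\omega)\times\II\sa$. The first is the only delicate one; for it I would verify by H\"older and Sobolev embedding that
\[
|v\cdot\nabla v|_{L_q(\Omega)}\le C\,|v|_{L_{2q}(\Omega)}\,|\nabla v|_{L_{2q}(\Omega)}\le C\,|v|^2_{H^{1+3/(2q)}_q(\Omega)}^{\,\text{interp.}},
\]
and interpolating between $X_0$ and $X_1$ in time and space exactly converts this to the condition
\[
\mu\ge \mu_{\rm crit}=\tfrac{1}{p}+\tfrac{3}{2q}-\tfrac{1}{2},
\]
which identifies $\mu_{\rm crit}$ as the critical weight (see \cite[Ch.~5]{PrSi16}). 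The other terms are easier since $\omega$ depends linearly and compactly on $v$ and $\sa\in\R^3$. With these bilinear estimates, $F\colon \EE_{1,\mu}(0,T)\to \EE_{0,\mu}(0,T):=L_{p,\mu}((0,T);X_0)$ is locally Lipschitz, and a contraction mapping argument on a ball around the solution of the linearized problem in $\EE_{1,\mu}(0,T)$ yields a unique solution and the continuous dependence \eqref{eq:continuous_dep}. The restriction $q<3$ ensures $\mu_{\rm crit}<1$, and the constraint $2/p+3/q\le 3$ ensures $1/p<\mu_{\rm crit}$, so the admissible range for $\mu$ is nonempty.

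\medskip
\noindent
\textbf{(b) Persistence in lower weights.} If $(p_j,q_j,\mu_j)$, $j=1,2$, both satisfy \eqref{assumptions-pq}--\eqref{assumptions-mu}, the ordering \eqref{mu-j} is exactly the embedding condition that guarantees
\[
{_0B}^{2\mu_1-2/p_1}_{q_1 p_1,\sigma}(\Omega)\hookrightarrow {_0B}^{2\mu_2-2/p_2}_{q_2 p_2,\sigma}(\Omega),
\]
by the usual scale of Besov embeddings. Hence the initial datum lies simultaneously in both trace spaces and the two solutions furnished by part~(a) coincide by uniqueness (solved in the stronger class), giving membership in both regularity classes.

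\medskip
\noindent
\textbf{(c) Maximal interval and regularization.} Concatenating local solutions in the usual way produces a maximal interval $[0,t_+)$. Continuity $v\in C([0,t_+);{_0B}^{2\mu-2/p}_{qp,\sigma}(\Omega))$ follows from the mixed derivative embedding $\EE_{1,\mu}(0,T)\hookrightarrow C([0,T];X_{\gamma,\mu})$; the statement $\sa\in C^1([0,t_+);\R^3)$ is then immediate from the ODE $\II\dot\sa = g(v,\sa)$. For the instantaneous regularization when $\mu<1$, I would pick any $t_0\in(0,t_+)$ and apply part~(b) with $\mu_2=1$, $p_2=p_1=p$, $q_2=q_1=q$: the solution at time $t_0$ lies in $B^{2-2/p}_{qp}$ by the time-weighted embedding $\EE_{1,\mu}(0,T)\hookrightarrow C((0,T];X_{\gamma,1})$ (see \cite[Prop.~3.4.3]{PrSi16}), and restarting from $t_0$ in the non-weighted class produces the asserted continuity on $(0,t_+)$.

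\medskip
\noindent
\textbf{(d) Global existence criterion.} If $u([0,t_+))$ is relatively compact in $X_{\gamma,\mu}$ but $t_+<\infty$, take a sequence $t_n\uparrow t_+$ with $u(t_n)\to u_*$ in $X_{\gamma,\mu}$ and restart the local problem from $u_*$ using part~(a): the existence time $T(u_*)>0$ is uniform on compact subsets (this follows from the uniform contraction radius $\eta(u_0)$ in (a)), so after $n$ large enough the solutions extend past $t_+$, contradicting maximality.

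\medskip
\noindent
\textbf{Main obstacle.} The genuinely subtle point is the critical case $\mu=\mu_{\rm crit}$. There the bilinear estimate for $v\cdot\nabla v$ no longer gives a small factor through $T\to 0$, and one must instead exploit that on a small ball of $X_{\gamma,\mu_{\rm crit}}$ the nonlinear map has small Lipschitz constant thanks to the borderline interpolation identity. The rest of the argument is a careful bookkeeping of interpolation/embedding exponents, all of which are encoded precisely in the inequalities \eqref{assumptions-pq}--\eqref{assumptions-mu}.
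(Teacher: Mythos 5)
Your treatment of (a), (c) and (d) follows essentially the route the paper takes: verify that the bilinear nonlinearity maps $X_\beta\times X_\beta$ into $X_0$ for $\beta=\tfrac14(1+3/q)$, use Proposition~\ref{pro:L-calculus} to secure maximal $L_{p,\mu}$-regularity (respectively the mixed-derivative embedding) for $L$, and then quote the abstract theory of \cite{PrSiWi18,PrWi17,PrSi16}; your covering argument for (d) is exactly what the abstract compactness criterion encodes. Two small slips there are harmless but worth fixing: the interpolated exponent controlling $v\cdot\nabla v$ is $2\beta=\tfrac12+\tfrac{3}{2q}$, not $1+\tfrac{3}{2q}$, and the roles of the hypotheses are swapped in your closing remark ($q<3$ gives $\mu_{\rm crit}>1/p$, while $2/p+3/q\le 3$ gives $\mu_{\rm crit}\le 1$).

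The genuine gap is in (b). You argue that the two solutions produced by part (a) for the parameter sets $(p_1,q_1,\mu_1)$ and $(p_2,q_2,\mu_2)$ ``coincide by uniqueness (solved in the stronger class).'' But uniqueness in (a) is uniqueness \emph{within} each class $\EE_{1,\mu_j}(0,T)$ separately, and neither class is contained in the other: $q_1\le q_2$ makes the second class spatially stronger, while \eqref{mu-j} forces $\mu_1-1/p_1\ge \mu_2-1/p_2$, which is precisely the inequality that prevents the temporal embedding of $\EE_{1,\mu_2}$ into $\EE_{1,\mu_1}$ (and the spatial embedding $H^2_{q_1}\hookrightarrow H^2_{q_2}$ fails for $q_1<q_2$). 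So neither solution is known a priori to lie in the other's class, and invoking uniqueness is circular — membership in both classes is exactly what (b) asks you to prove. The paper closes this by observing that both solutions arise as fixed points of the \emph{same} map ${\sf T}(v,\sa)=e^{-tL}(v_0,\sa_0)+e^{-tL}\star F(v,\sa)$, which is a strict contraction on closed sets ${\sf M}_j\subset \EE_{1,\mu_j}(0,T)$, and then running the contraction on the intersection ${\sf M}_1\cap{\sf M}_2$: its unique fixed point is a fixed point in each ${\sf M}_j$, hence coincides with both solutions, which therefore agree and lie in both classes. Some device of this kind (or a separate weak–strong type uniqueness argument across the two classes) is needed; as written, your one-line deduction does not establish (b).
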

\begin{proof}
The assertions in (a),(c),(d) follow from \cite[Theorem 2.1]{PrSiWi18} (or \cite[Theorem~1.2]{PrWi17}) provided 
we can verify the assumptions therein.

For $\beta\in (0,1)$, let $X_\beta:=[X_0,X_1]_\beta$.
Then we have
$X_\beta={_0H^{2\beta}_{q,\sigma}(\Omega)}\times \R^3$, where
${_0H}^{s}_{q,\sigma}(\Omega)$ is defined by
\begin{equation}
\label{H2-interpolation}
{_0H}^{s}_{q,\sigma}(\Omega):=
\left\{ 
\begin{aligned}
&\{u\in  H^{s}_{q}(\Omega)\cap L_{q,\sigma}(\Omega): u=0\; \text{on}\; \partial\Omega\}, &&s>1/q,\\
&H^{s}_{q}(\Omega)\cap L_{q,\sigma}(\Omega), && s\in [0,1/q).\\
\end{aligned}
\right.
\end{equation}
We note that the nonlinearity $F$ has a bilinear structure, i.e., $F(u)=G(u,u)$, where
\begin{equation}\label{eq:bilinear_nonlinearity}
G(u_1,u_2)=-E^{-1}(\PP(v_1\nabla v_2  + 2(\sa_1-\omega_1)\times v_2), (\sa_1-\omega_1)\times \II\sa_2)
\end{equation} 
and $u_i=(v_i,\sa_i)$.
We  claim that 
$G:X_\beta\times X_\beta\to X_0$ is  bounded for  $\beta=\frac{1}{4}\big(1+\frac{3}{q}\big).$
As in  \cite[Section 3]{PrWi17} we obtain boundedness of 
$[(v_1,v_2)\mapsto v_1\nabla v_2]: X^1_\beta\times X^1_\beta\to X^1_0$,
where we use the canonical decomposition $X_\theta=X^1_\theta \times\R^3$.
The remaining term  $ (2(\sa_1-\omega_1)\times v_2, (\sa_1-\omega_1)\times \II\sa_2)$ is easy to estimate,
and the claim follows from Proposition~\ref{pro:E-invertible} and boundedness of $\PP$ in $L_q(\Omega)$.
It follows that the assumptions of  \cite[Theorem 3.1]{PrSiWi18} are satisfied for all $p$ and $q$ in the range indicated above.
(Alternatively, one verifies that  (H1)-(H2) in \cite{PrWi17} holds with $\rho_j=1$ and $\beta_j=\beta$).
 
It remains to address the assumption  $L\in\mathcal{BIP}(X_0)$ imposed in \cite[Section~2]{PrSiWi18},
(or, alternatively, the structural condition (S) in \cite{PrWi17}).
In either case, the assumption is used to ensure that the mixed-derivative embedding 
\begin{equation}
\label{mixed-derivative}
{_0\EE}_{1,\mu}(0,a)\hookrightarrow {_0H}^{1-\beta}_{p,\mu}((0,a);X_\beta)
\end{equation}
holds, where ${_0\EE}_{1,\mu}(0,a)=\{u\in \EE_{1,\mu}(0,a):\; u(0)=0\}.$ 
We observe that \eqref{mixed-derivative} follows from \cite[Remark 1.1]{PrWi17} and Proposition~\ref{pro:L-calculus} 
by choosing $A_{\#}=\eta+L$ for $\eta>0$. 
It is worthwhile to mention that in our particular situation it actually suffices to have
$A_1\in \mathcal{BIP}(L_{q,\sigma}(\Omega))$  for $L={\rm diag}\,[A_1,0]$,
as the embedding \eqref{mixed-derivative} is automatically satisfied for 
${_0H}^1_{p,\mu}((0,a);\R^3)$.
As a consequence, all results of Section 2 of \cite{PrSiWi18} hold true for the semilinear evolution equation~\eqref{eq:evolution}. 

We are left with proving (b). We note that the embedding $B^s_{q_1p_1}(\Omega)\subset B^s_{q_1p_2}(\Omega)$
and Sobolev embedding  yields
$${B}^{2\mu_1-2/p_1}_{q_1p_1}(\Omega)\subset {B}^{2\mu_2-2/p_2}_{q_2p_2}(\Omega).$$
Therefore, the initial value $(v_0,{\sf a}_0)$ also belongs to the second space in the line above.
Let $(v_j,{\sf a}_j)$ be the unique solution of \eqref{eq:evolution} in the respective class
\begin{equation}\label{EE-j}
\EE_{1,\mu_j}(0,T):=H^1_{p_j,\mu_j}((0,T); L_{q_j,\sigma}(\Omega)\times\R^3)\cap L_{p_j,\mu_j}((0,T); {_0H}^2_{q_j,\sigma}(\Omega)\times\R^3).
\end{equation} 
In both cases, the solutions were obtained as a fixed point of the strict contraction
$$ {\sf T}: {\sf M_j}\to {\sf M_j},\quad  {\sf T}(v,\sa) :=e^{-tL}(v_0, {\sf a}_0) + e^{-tL}\star F(v,{\sf a}), $$
where ${\sf M_j}$ is a closed subset of $\EE_{\mu_j}(0,T)$, respectively.
But ${\sf T}: {\sf M_1}\cap {\sf M_2}\to {\sf M_1}\to {\sf M_2}$ is also a strict contraction,
and thus has a unique fixed point $(v_\star, a_\star)\in {\sf M_1}\cap {\sf M_2}$.
Therefore,  $(v_1,{\sf a}_1)=(v_2,{\sf a_2})=(v_\star, a_\star)$
and the assertion in (b) follows.
\end{proof} 
\begin{remarks}\label{remark-2}
{\rm {\bf (a)}
For the case  $\mu=\mu_{\rm crit}$ we get the critical spaces
\begin{equation}
\label{strong-critical-space}
X_{\rm crit}={_0B}^{3/q-1}_{pq,\sigma}(\Omega)\times \R^3.
\end{equation}
Theorem~\ref{th:local_strong} then shows that equation \eqref{problem-2} admits a unique maximal solution
for initial values $(v_0,\sa_0)$  in  $X_{\rm crit}$.
Here one should observe that the homogeneous space $\dot B^{3/q-1}_{qp}(\R^3)$
is  scaling invariant for the Navier-Stokes equations on $\R^3$.
It is also interesting to note that each critical space ${_0B}^{3/q-1}_{pq,\sigma}(\Omega)$ has  Sobolev index $-1$,
independently of $p$ and $q$.
\medskip\\
\noindent
{\bf (b)}
The case $p_1=q_1=2$ is admissible in Theorem~\ref{th:local_strong}, and we then obtain $\mu_{\rm crit}=3/4$ and 
\begin{equation}
\label{p=q=2}
X_{\rm crit}=(L_{2,\sigma}(\Omega)\times\R^3, {_0H}^2_{2,\sigma}(\Omega)\times\R^3)_{1/4,2}
\subset {H}^{1/2}_{2,\sigma}(\Omega)\times \R^3.
\end{equation}
Hence, Theorem~\ref{th:local_strong}(b) asserts that  \eqref{eq:evolution} has for each initial value $(v_0,{\sf a}_0)\in X_{\rm crit}$ a unique solution in the class
\begin{equation*}
\begin{split} 
&H^1_{2,3/4}((0,T); L_{2,\sigma}(\Omega)\times\R^3)\cap L_{2,3/4}((0,T); {_0H}^2_{2,\sigma}(\Omega)\times\R^3) \\
&\cap H^1_{p,\mu}((0,T); L_{q,\sigma}(\Omega)\times\R^3)\cap L_{p,\mu_2}((0,T); {_0H}^2_{q,\sigma}(\Omega)\times\R^3),
\end{split}
\end{equation*}
for any $p\ge 2, q\in [2,3)$, with $\mu=1/p + 3/2q -1/2.$
In particular, we can conclude that $v\in C((0,t_+); B^{2-2/p}_{qp}(\Omega))$ for any $p\ge 2, q\in [2,3)$.

The result for the case $p=q=2$ is reminiscent of the celebrated Fujita-Kato Theorem \cite{FuKa62} for the Navier-Stokes equations.
\smallskip\\
\noindent
{\bf (c)} Theorem~\ref{th:local_strong}(b) asserts that problem \eqref{eq:evolution} admits for each initial value
$$(v_0,{\sf a_0})\in {_0H}^1_{2,\sigma}(\Omega)\times\R^3$$
a unique solution in the class 
\begin{equation*}
\begin{split}
&H^1_{2}((0,T); L_{2,\sigma}(\Omega)\times\R^3)\cap L_{2}((0,T); {_0H}^2_{2,\sigma}(\Omega)\times\R^3) \\
&\cap H^1_{p,\mu}((0,T); L_{q,\sigma}(\Omega)\times\R^3)\cap L_{p,\mu}((0,T); {_0H}^2_{q,\sigma}(\Omega)\times\R^3),
\end{split}
\end{equation*}
for any $p\ge 2, q\in [2,3)$, with $\mu=1/p +3/2q-1/4$.
In particular, we can conclude that $v\in C((0,t_+); B^{2-2/p}_{qp}(\Omega))$ for any $p\ge 2, q\in [2,3)$.
}
\end{remarks}  


\section{The spectrum of $L_*$}\label{se:spectrum}
 In this section, we will investigate the linear stability of equilibria for \eqref{problem}. 

\medskip
Suppose $(0,\sa_*)$ is a nontrivial equilibrium of \eqref{problem}, i.e. $\sa_*\in \R^3\setminus\{0\}$ and 
\begin{equation}
\II\sa_*=\lambda_* \sa_*\quad\text{for some}\quad \lambda_*\in\{\lambda_1,\lambda_2,\lambda_3\}.
\end{equation}
The linearization of  \eqref{problem} at an equilibrium $(0,\sa_*)$  is given by 
\begin{equation}
\label{linearize}
\begin{aligned}
\partial_t v + (\dot\sa -\dot\omega)\times x + 2({\sa_*}\times v) 
-\upnu \Delta v +\nabla p &=f &&\text{in} && \Omega, \\
{\rm div}\, v &= 0 &&\text{in} && \Omega, \\
v &=0 &&\text{on} && \partial\Omega, \\
\II\, \dot\sa + \sa_*\times \II\sa + (\sa-\omega)\times\II\sa_* &=g &&\text{on}&& \R^3,\\
(v(0),\sa(0)) &= (v_0,\sa_0) \\
\end{aligned}
\end{equation}
Let $A_*$  and $L_*$ be defined by
\begin{equation}
\label{def-A_*}
\begin{aligned}
A_*(v,\sa) &:=\!\big(\!-\!\upnu\PP\Delta v  + 2 \PP(\sa_*\times v), 
                \lambda_*\sa_*\times \II^{-1}\! \int_\Omega \!\!(x\times v)\,dx + \sa_* \times (\II-\lambda_*)\sa\big) \\
L_*(v,\sa)&:= E^{-1} A_*(v,\sa)       
\end{aligned}
\end{equation}
for $(v,\sa)\in {_0}H^2_{q,\sigma}(\Omega)\times\R^3$,
where  $E$ has been introduced in \eqref{def-E-A_0}.
Then the linear problem~\eqref{linearize} is equivalent to
\begin{equation}\label{eq:evolution-2}
\frac{d}{dt}u + L_*u=F_*(t),\quad u(0)=u_0,  
\end{equation}
where  $u:=(v,\sa)$, $u_0:=(v_0,\sa_0)$, and  $F_*(t):=E^{-1}(\PP f(t),g(t)).$
We can prove the following result

\begin{proposition}
\label{pro:L*-calculus}
There exists a number $\eta_0>0$ such that $\eta+L_*\in \cH^\infty(X_0)$ 
with $\cH^\infty$-angle $\phi^\infty_{L_*}<\pi/2$ 
for each $\eta\ge \eta_0$.
\end{proposition}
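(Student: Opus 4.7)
The plan is to view $L_*$ as a bounded perturbation of the operator $L$ already handled in Proposition~\ref{pro:L-calculus} and to conclude via a standard shift-and-perturb result for the $\cH^\infty$-calculus. Writing $L_* = L + B$ with $B := E^{-1}(A_* - A)$, a direct computation from \eqref{def-E-A_0} and \eqref{def-A_*} gives
\begin{equation*}
(A_* - A)(v,\sa) = \Bigl(2\PP(\sa_*\times v),\; \lambda_* \sa_*\times\II^{-1}\!\!\int_\Omega(x\times v)\,dx + \sa_*\times(\II-\lambda_*)\sa\Bigr),
\end{equation*}
so no derivatives of $v$ appear on the right-hand side.

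Next I would verify that $B \in \cB(X_0)$. The first component of $A_* - A$ is bounded on $L_{q,\sigma}(\Omega)$ because $\sa_* \in \R^3$ is a fixed multiplier and $\PP$ is bounded on $L_q(\Omega)$. In the second component, $v\mapsto \int_\Omega (x\times v)\,dx$ is a bounded linear functional from $L_{q,\sigma}(\Omega)$ into $\R^3$ by H\"older's inequality (since $\Omega$ is bounded), and $\sa\mapsto \sa_*\times(\II-\lambda_*)\sa$ is a bounded linear map on $\R^3$. Combined with the boundedness of $E^{-1}$ on $X_0$ from Proposition~\ref{pro:E-invertible}, this gives $B \in \cB(X_0)$.

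Proposition~\ref{pro:L-calculus} supplies $\eta_1 + L \in \cH^\infty(X_0)$ with angle $<\pi/2$ for every $\eta_1 > 0$. Since $B$ is everywhere defined and bounded while $L$ is invertible and sectorial with $\phi_L^\infty < \pi/2$, the estimate
\begin{equation*}
\|B(\eta + L)^{-1}\|_{\cB(X_0)} \leq \|B\|_{\cB(X_0)}\,\|(\eta+L)^{-1}\|_{\cB(X_0)} \longrightarrow 0 \quad \text{as } \eta\to\infty
\end{equation*}
shows that $B$ becomes an arbitrarily small relative perturbation of $\eta + L$. A standard perturbation result for the $\cH^\infty$-calculus in the spirit of \cite[Corollary~3.1.6 and Proposition~3.3.9]{PrSi16}, which already drove the proof of Proposition~\ref{pro:L-calculus}, then yields $\eta + L_* = (\eta + L) + B \in \cH^\infty(X_0)$, invertible, with angle $<\pi/2$, for all $\eta \geq \eta_0$ once $\eta_0$ is chosen sufficiently large.

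The only substantive verification is the algebraic observation that $A_* - A$ is zeroth order; after that the argument reduces to a routine application of the perturbation theory already exercised in Proposition~\ref{pro:L-calculus}, and I do not anticipate any major obstacle. It is worth remarking that, in contrast to Proposition~\ref{pro:L-calculus}, here no use of the smoothing structure of $B$ (or of the finite-rank structure of $C$) is needed, because full boundedness of $B$ on the base space $X_0$ already makes the shift argument succeed.
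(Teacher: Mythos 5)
Your proof is correct and takes essentially the same route as the paper: the paper likewise writes $L_*$ as a principal diagonal part plus a bounded remainder $R\in\cB(X_0)$ (it merely places $I_{\R^3}$ in the diagonal part rather than in the perturbation) and concludes from Proposition~\ref{pro:L-calculus} together with the bounded-perturbation result \cite[Corollary~3.3.15]{PrSi16}. The one point to tighten is that the decisive tool is exactly that lower-order perturbation theorem (a $\cB(X_0)$-perturbation plus a sufficiently large shift preserves the $\cH^\infty$-calculus and the angle bound), not the smallness of $\Norm{B(\eta+L)^{-1}}$ by itself, since small relative perturbations do not in general preserve a bounded $\cH^\infty$-calculus.
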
 
\begin{proof}
A short computation shows that 
\begin{equation}  \label{eq:L*}
L_*=\left[\begin{array}{cc}
-\upnu (I+ C)\PP\Delta  &0\\
      0 & I_{\R^3}
      \end{array}\right]+R, 
\end{equation} 
where
$R$ is a bounded operator on $X_0$.
The assertion follows now from Proposition~\ref{pro:L-calculus} and \cite[Corollary 3.3.15]{PrSi16}.
\end{proof}

In the following we assume that the eigenvalues $\lambda_j$ of $\II$ are ordered by $\lambda_1\le \lambda_2\le\lambda_3$.
We note that the eigenvalue problem $zu+L_*u=0$ is equivalent to $zEu+A_*u=0$, which in turn reads
\begin{equation}
\label{EV-problem}
\begin{aligned}
z\big( v + \PP((\sa -\omega)\times x)\big) + 2\PP({\sa_*}\times v) - \upnu\PP \Delta v &=0 &&\text{in} && \Omega, \\
v &=0 &&\text{on} && \partial\Omega, \\
z\II\sa + \sa_*\times \II\sa + (\sa-\omega)\times\II\sa_* &=0 &&\text{on}&& \R^3.\\
\end{aligned}
\end{equation}
\begin{theorem} 
\label{th:spectrum}
Let $(0,\sa_*)\in \cE$ be an equilibrium of system~\eqref{problem} with $\sa_*\neq 0$,
and recall that $\sa_*\in {\sf N}(\lambda_*-\II)$.
Then we have the following properties.
\begin{enumerate}
\setlength\itemsep{1mm}
\item[({\bf a)}] The spectrum of $L_*$ consists of countably many eigenvalues which are all of finite 
algebraic multiplicity.
\item[{\bf (b)}] $0$ is a semi-simple eigenvalue with multiplicity equal to the dimension of $\cE$.
\item[{\bf (c)}] $\sigma(L_*)\setminus\{0\}\cap i\R=\emptyset.$
\item[{\bf (d)}]  All eigenvalues of $\sigma(-L_*)\setminus\{0\}$ have negative real part if 
$\lambda_*=\lambda_3$.
\item[{\bf (e)}] 
$-L_*$ has exactly one positive eigenvalue if $\lambda_1\le \lambda_\ast=\lambda_2<\lambda_3$.
\item[{\bf (f)}]
$-L_*$ has exactly two eigenvalues in $[{\rm Re}\,z>0]$ if $\lambda_*=\lambda_1<\lambda_2\le \lambda_3$.
\end{enumerate}
In conclusion, $(0,\sa_*)$ is normally stable if $\lambda_*=\max\{\lambda_1,\lambda_2,\lambda_3\}$,
and normally hyperbolic otherwise.
\end{theorem}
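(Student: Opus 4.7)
The plan is to address (a)--(f) in order, with the main geometric insight concentrated in (d)--(f).

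For (a), since $X_1\hookrightarrow X_0$ is compact, $L_*$ has compact resolvent and hence $\sigma(L_*)$ consists of countably many eigenvalues of finite algebraic multiplicity. For (b), invertibility of $E$ turns $L_*u=0$ into $A_*u=0$; testing the first line of \eqref{EV-problem} at $z=0$ against $\bar v$ and using $(\sa_*\times v)\cdot\bar v\in i\R$ (antisymmetry of the scalar triple product) yields $\upnu|\nabla v|^2_\Omega=0$, hence $v=0$ and $\omega=0$. The second line then reduces to $\sa_*\times(\II-\lambda_*)\sa=0$; since $(\II-\lambda_*)\sa\in{\sf N}(\lambda_*-\II)^\perp$ while $\sa_*\in{\sf N}(\lambda_*-\II)$, this parallelism forces $\sa\in{\sf N}(\lambda_*-\II)$, so $\ker L_*=\{0\}\times{\sf N}(\lambda_*-\II)$, whose dimension matches that of $\cE$ at $(0,\sa_*)$. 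For semi-simplicity I exploit $\ker L_*\subseteq\ker{\sf C}$, where ${\sf C}={\sf C}(e_*)$ is the Hessian from Proposition~\ref{prop:equilibria}: if $L_*u=u_0\in\ker L_*$, then $u(t):=u-tu_0$ is a polynomial solution of $\dot u+L_*u=0$ with $\langle{\sf C}u(t),u(t)\rangle$ constant, so the linearized dissipation identity below forces $v=0$, and substituting back into $\dot u+L_*u=0$ together with $\operatorname{curl}(b\times x)=2b$ for constant $b$ then yields $u_0=0$.

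For (c), the key tool is the linearized dissipation identity
\begin{equation*}
\frac{d}{dt}\tfrac{1}{2}\langle{\sf C}\tilde u(t),\tilde u(t)\rangle=-\upnu|\nabla\tilde v(t)|^2_\Omega,
\end{equation*}
obtained by expanding $\tfrac{d}{dt}({\sf E}+\mu{\sf M})=-\upnu|\nabla v|^2_\Omega$ (with $\mu=-1/\lambda_*$) to second order at $e_*$ and invoking $({\sf E}+\mu{\sf M})'(e_*)=0$. If $z=i\beta$ with $\beta\neq 0$ were an eigenvalue of $-L_*$ with eigenvector $u_0$, then $\tilde u_R(t):=\operatorname{Re}(e^{zt}u_0)$ is a nonzero real $(2\pi/|\beta|)$-periodic solution of $\dot{\tilde u}+L_*\tilde u=0$; integrating the identity over one period yields $\tilde v_R\equiv 0$, and the same argument for the imaginary part gives $v_0=0$. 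Substituting $v_0=0$ (hence $\omega_0=0$) into \eqref{EV-problem} reduces the first line to $z\,\PP(\sa_0\times x)=0$, and the curl identity then forces $\sa_0=0$, contradicting nontriviality of $u_0$.

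For (d)--(f), non-trivial eigenvectors of $-L_*$ corresponding to nonzero eigenvalues lie in $\ker{\sf M}'(\sa_*)=L_{q,\sigma}(\Omega)\times\sa_*^\perp$, since ${\sf M}'(\sa_*)\tilde u=\lambda_*^2(\sa_*|\tilde\sa)$ is a linear invariant of the linearized flow. Writing $\varphi({\sf b})=\sum_i(\lambda_*-\lambda_i)\lambda_i{\sf b}_i^2$ in the principal-axis frame as in \eqref{pos-def}, a direct computation shows that the Morse index of ${\sf C}$ on the invariant subspace associated to $\sigma(L_*)\setminus\{0\}$ equals $m:=\#\{i:\lambda_i>\lambda_*\}$, giving $m=0,1,2$ in cases (d), (e), (f) respectively. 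For (d), the positive definiteness of ${\sf C}$ combined with the monotonicity $\tfrac{d}{dt}\langle{\sf C}\tilde u,\tilde u\rangle\le 0$ rules out eigenvalues with $\operatorname{Re}(z)>0$: for such an eigenvector $u_0$, the quantity $\langle{\sf C}u_R(t),u_R(t)\rangle$ would grow exponentially, contradicting monotonicity. For (e) and (f) I would invoke a dissipative-system instability-index identity: the number of eigenvalues of $-L_*$ in $\{\operatorname{Re}(z)>0\}$, counted with algebraic multiplicity, equals $m$. The hard part will be establishing this correspondence rigorously in the present infinite-dimensional, non-self-adjoint setting: a geometric route decomposes $X_0=\ker L_*\oplus V_s\oplus V_u$ (no center subspace by (c)), proves $\dim V_u\le m$ by showing ${\sf C}\le 0$ on $V_u$ (else Lyapunov monotonicity fails along exponentially growing modes), and obtains $\dim V_u\ge m$ via a topological intersection argument between the negative cone of ${\sf C}$ and the stable/kernel spectral subspace, where ${\sf C}$ must stay bounded. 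Combining (a)--(f) then yields the normally stable (when $\lambda_*=\lambda_3$) versus normally hyperbolic dichotomy stated in the conclusion.
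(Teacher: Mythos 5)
Your route is genuinely different from the paper's. The paper proves (b)--(d) by testing the eigenvalue problem \eqref{EV-problem} directly, the key step being the identity \eqref{Re-3}, and proves (e)--(f) by a homotopy in the middle moment of inertia: an implicit function theorem argument shows that a simple eigenvalue crosses the origin along the real axis with speed $z^\prime(\mu_3)>0$, and since no eigenvalue can cross the imaginary axis except at $\mu=\mu_3$, the exact count follows. Your variational/Lyapunov route through the constrained Hessian ${\sf C}$ and a linearized dissipation identity is coherent, and for (a)--(d) it can be completed; what it buys is a uniform mechanism (monotonicity of $\langle {\sf C}\tilde u,\tilde u\rangle$) replacing the paper's separate testing computations. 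But two points need care. First, the linearized dissipation identity is the linchpin and must actually be established, either by differentiating the pointwise identity $\langle ({\sf E}+\mu{\sf M})^\prime(u),N(u)\rangle=-\upnu|\nabla v|^2_\Omega$ twice at $e_*$ (using $N(e_*)=0$ and $({\sf E}+\mu{\sf M})^\prime(e_*)=0$) or by a direct computation on \eqref{linearize}; as written it is only asserted. Second, in (d) ${\sf C}$ is merely positive \emph{semi}-definite on $\ker {\sf M}^\prime(\sa_*)$ (degenerate along $\ker L_*$, e.g.\ when $\lambda_2=\lambda_3$), so the claim that $\langle{\sf C}u_R(t),u_R(t)\rangle$ ``would grow exponentially'' is not immediate; the correct deduction is that this quantity is nonnegative and non-increasing, hence bounded, so Lemma~\ref{lem:positive_def} forces $v_0=0$, and then the curl argument gives $\sa_0=0$, a contradiction.

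The genuine gap is in (e)--(f), which carry the quantitative content of the theorem. You reduce them to the index identity ``number of eigenvalues of $-L_*$ in $[{\rm Re}\,z>0]$ equals $m=\#\{i:\lambda_i>\lambda_*\}$'', but you do not prove it: the lower bound $\dim V_u\ge m$ is left as an unspecified ``topological intersection argument'', and the assertion that the Morse index of ${\sf C}$ on the spectral complement of $\ker L_*$ equals $m$ is itself part of what must be shown, not a ``direct computation''. The gap is closable, and without topology: (i) on the finite-dimensional real unstable subspace $V_u$ one has ${\sf C}\le 0$ (solutions in $V_u$ tend to $0$ as $t\to-\infty$ and the form is non-increasing), $V_u\subset L_{2,\sigma}(\Omega)\times \sa_*^\perp$, and by \eqref{pos-def} the coordinate map $({\sf w},{\sf b})\mapsto({\sf b}_i)_{\lambda_i>\lambda_*}$ is injective on $V_u$ (its kernel would lie in $\ker{\sf C}\cap V_u\subset \ker L_*\cap V_u=\{0\}$), whence $\dim V_u\le m$; (ii) ${\sf C}\ge 0$ on $\ker L_*\oplus V_s$ (decay as $t\to+\infty$ together with $\ker L_*\subset\ker{\sf C}$), while ${\sf C}<0$ on the $m$-dimensional subspace $\{0\}\times{\rm span}\{{\sf e}_i:\lambda_i>\lambda_*\}$, so this subspace meets $\ker L_*\oplus V_s$ only in $0$ and injects into $V_u$ under the spectral projection, whence $\dim V_u\ge m$. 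For this you must also justify the splitting $X_0=\ker L_*\oplus V_s\oplus V_u$ with finitely many unstable eigenvalues and exponential decay on $V_s$ (sectoriality and compact resolvent, Proposition~\ref{pro:L*-calculus}), and work at $q=2$ (the spectrum is $q$-independent) so that ${\sf C}$ is continuous on $X_0$. Note finally that in case (e) the statement ``exactly one \emph{positive} eigenvalue'' then follows because $\dim V_u=1$ excludes a complex conjugate pair, a fact the paper obtains instead from tracking the real crossing eigenvalue $z(\mu)$.
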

\begin{proof}
{\bf (a)} As $L_*$ has compact resolvent,
the spectrum of $L_*$ consists entirely of eigenvalues of finite algebraic multiplicity.
\medskip\\
\noindent
{\bf(b)}
Suppose $z=0$. Then the eigenvalue problem~\eqref{EV-problem} reads
\begin{equation}
\label{EV-problem-0}
\begin{aligned}
 2\PP({\sa_*}\times v) - \upnu \PP \Delta v &=0 &&\text{in} && \Omega, \\
v &=0 &&\text{on} && \partial\Omega, \\
\sa_*\times \II\sa + (\sa-\omega)\times\II\sa_* &=0 &&\text{on}&& \R^3.\\
\end{aligned}
\end{equation}
Multiplying the first equation with $v$ and integrating over $\Omega$ yields
\begin{equation*}
\begin{aligned}
0=2 (\PP({\sa_*}\times v)|v)_\Omega -\upnu (\PP \Delta v|v)_\Omega
  =2({\sa_*}\times v|v)_\Omega  -\upnu (\Delta v|v)_\Omega = \upnu |\nabla v|^2_\Omega,
\end{aligned}
\end{equation*}
where we used that $v \perp ({\sa_*}\times v )$.
From the Dirichlet boundary condition follows $v=0$.
The third equation~\eqref{EV-problem-0} now reduces to 
$$  0= \sa_*\times \II\sa + \sa\times\II\sa_* = \sa_*\times (\II-\lambda_*)\sa.$$
Taking the cross product  with $\sa_*$ results in 
$$
0=\sa_*\times \big(\sa_*\times (\II-\lambda_*)\sa\big)
=(\sa_*|(\II-\lambda_*)\sa)\sa_* - |\sa_*|^2 (\II-\lambda_*)\sa
=- |\sa_*|^2 (\II-\lambda_*)\sa.
$$
This implies $\sa\in {\sf N}(\lambda_*-\II)$,  and we therefore have $(v,\sa)\in\cE$.

Next we show that $z=0$ is semi-simple. From \eqref{eq:L*} it follows that $L_*$ is a Fredholm operator of index 0. Hence, to prove that $z=0$ is a semi-simple eigenvalue, it suffices to verify that ${\sf N}(L_*^2)={\sf N}(L_*)$.
Suppose then that $L_*^2 \hat u=0$, and let $u:=L_*\hat u$.
Then $L_*u=0$, and hence $u=(0,\sa)$ with $\sa\in {\sf N}(\lambda_*-\II)$ by the first part.
We want to show that $\sa=0$, since then $L_*\hat u=0$, showing that $\hat u\in {\sf N}(L_*)$.
For this purpose, note that the relation $L_*(\hat v,\hat\sa)=(0,{\sf a})$  can be restated
as $A(\hat v,\hat \sa)=E(0,\sa )$, or equivalently as
\begin{equation}
\label{EV-2}
\begin{aligned}
-\upnu\PP\Delta \hat v  + 2 \PP(x\times \hat v) & = -\PP (x\times \sa) \\
 \lambda_* (\sa_*\times \hat \omega)  + \sa_* \times (\II-\lambda_*)\hat\sa & = \II \sa,
\end{aligned}
\end{equation}
where $\hat\omega=\II^{-1}\int_\Omega (x\times \hat v)\,dx$.
Multiplying the first equation in~\eqref{EV-2} by $\hat v$ and integrating over $\Omega$ yields
\begin{equation}
\label{BB}
\begin{aligned}
0 &=-\upnu (\PP\Delta \hat v| \hat v)_\Omega  + 2 (\PP(x\times \hat v) | \hat v)_\Omega + (\PP (x\times \sa) | \hat v)_\Omega\\
   &= \upnu |\nabla \hat v|^2_\Omega +2 (x\times \hat v | \hat v)_\Omega + (x\times \sa | \hat v)_\Omega\\
   &= \upnu |\nabla \hat v|^2_\Omega -(\II\sa | \hat\omega) = \upnu |\nabla \hat v|^2_\Omega -\lambda_* (\sa | \hat\omega).
\end{aligned}
\end{equation}
From the second line in \eqref{EV-2} follows $0=(\II \sa | \sa_*)=\lambda_*(\sa | \sa_*)$ showing that $\sa\perp \sa_*$.
Next we observe that
\begin{equation*}
\begin{aligned}
\lambda_* \sa_* \times \sa  &= \sa_* \times \II \sa 
= \sa_* \times (\sa_* \times (\lambda_*\hat\omega + (\II -\lambda_*)\hat\sa))\\
&=(\sa_* | \lambda_*\hat\omega + (\II -\lambda_*)\hat\sa) \sa_*  -| \sa_* |^2 (\lambda_*\hat\omega + (\II -\lambda_*)\hat\sa).
\end{aligned}
\end{equation*}
Taking the scalar product of this equation with $\sa$, and using that $\sa\perp \sa_*$ as well as  $\sa \in{\sf N}(\lambda_*-\II)$,  yields
\begin{equation*}
\begin{aligned}
0&=\lambda_* ( \sa_* \times \sa| \sa) 
=(\sa_* | \lambda_*\hat\omega + (\II -\lambda_*)\hat\sa) (\sa_* | \sa)  
-| \sa_* |^2 (\lambda_*\hat\omega + (\II -\lambda_*)\hat\sa)| \sa)\\
&=-\lambda_*| \sa_* |^2 ( \hat\omega | \sa).
\end{aligned}
\end{equation*}
Hence, $(\hat\omega | \sa)=0$, as  $\sa_*\neq 0$ by assumption. 
We conclude from~\eqref{BB} and Poincar\'e's inequality that $\hat v=0$, which in turn also implies $\hat\omega=0$. 
Therefore, 
$$\II \sa = \sa_* \times (\II-\lambda_*)\hat\sa \quad\text{and}\quad \PP (x\times \sa)=0,$$
which implies $x\times a=\nabla p$ for some function $p$.
Applying ${\rm curl }$ to this equation gives
\begin{equation*}
\begin{aligned}
0= {\rm curl}\, (\nabla p)= {\rm curl}\,(x\times \sa)=-2\sa,
\end{aligned}
\end{equation*} 
and so $\sa=0$, which is what we wanted to show. 
\medskip\\
\noindent
{\bf(c)} 
Suppose that $z\neq 0$ is an eigenvalue of \eqref{EV-problem} with $(v,\sa)$ a corresponding eigenfunction.
Multiplying the first equation with $\bar v$ and $\bar \omega$, the complex conjugates of $v$ and $\omega$ respectively, and integrating over $\Omega$ results in
\begin{equation*}
\begin{aligned}
0=& z|v|^2_\Omega + \upnu |\nabla v|^2_\Omega + z(a-\omega | \int_\Omega (x\times \bar v)\,dx)
 + 2 (\sa_* | \int_\Omega (v\times \bar v)\,dx)
\\
&= z\big(|v|^2_\Omega- (\II \omega | \omega)_{\CC^3}\big) + \upnu |\nabla v|^2_\Omega + z (\II \sa | \omega)_{\CC^3}
-4i (\sa_* | \int_\Omega ({\rm Re}\, v \times {\rm Im}\, v)\,dx)_{\R^3}.
\end{aligned}
\end{equation*} 
Taking real parts results in
\begin{equation}
\label{Re-1}
 ({\rm Re}\,z)\big(|v|^2_\Omega- (\II \omega | \omega)_{\CC^3}\big) +{\rm Re}\,[z (\II \sa | \omega)_{\CC^3}]
 + \upnu |\nabla v|^2_\Omega =0.
\end{equation}
Next we show that
\begin{equation}
\label{Re-2}
{\rm Re}\, [z(\II a|\omega)_{\CC^3}] = \lambda_*^{-1}({\rm Re}\,z) (\II \sa |(\lambda_*-\II)\sa).
\end{equation}
In order to see this, we use the last equation of~\eqref{EV-problem}
\begin{equation}
\label{EV-last-line}
 z\II\sa + \sa_*\times \II\sa + (\sa-\omega)\times\II\sa_* =0.
 \end{equation}
Taking the inner product  of \eqref{EV-last-line} in $\CC^3$ with $\omega$ and $\sa$, 
the complex conjugates of $\sa$ and $\omega$, respectively,
yields
\begin{equation*}
\begin{aligned}
& z(\II \sa| \omega)_{\CC^3} + (\sa_* \times \II \sa|\omega)_{\CC^3} + ((\sa-\omega)\times \II \sa_*|\omega)_{\CC^3}=0,\\
& z(\II \sa| \sa )_{\CC^3} + (\sa_* \times \II \sa| \sa)_{\CC^3} + ((\sa-\omega) \times \II \sa_*|\sa)_{\CC^3}=0.
\end{aligned}
\end{equation*}
Subtracting the second line from the first one gives
\begin{equation}
\label{B}
z (\II \sa  |\omega)_{\CC^3}= z(\II \sa |\sa )_{\CC^3} + (\sa _* \times \II \sa |\sa -\omega)_{\CC^3} 
- 2i ({\rm Re}\,(\sa -\omega)\times {\rm Im}\, (\sa -\omega)|\II \sa _*). 
\end{equation}
Taking the inner product  of \eqref{EV-last-line}  in $\CC^3$ with $\II \sa$ results in 
\begin{equation}
\label{C}
z (\II \sa |\II \sa )_{\CC^3}-2i({\rm Re}\, \II\sa \times {\rm Im}\, \II\sa |\sa _*) + \lambda_* (\sa _* \times \II \sa | \overline{\sa -\omega})_{\CC^3}=0.
\end{equation}
Combining \eqref{B} and \eqref{C} implies \eqref{Re-2}.
We can now substitute \eqref{Re-2} into \eqref{Re-1} to obtain the following key identity
\begin{equation}
\label{Re-3}
 ({\rm Re}\,z)\big(|v|^2_\Omega- (\II \omega | \omega)_{\CC^3}\big) + \lambda_*^{-1}({\rm Re}\,z) (\II \sa |(\lambda_*-\II)a)
 + \upnu |\nabla v|^2_\Omega =0.
\end{equation}
If follows from Lemma \ref{lem:positive_def}  (applied to ${\rm Re}\, v$ and ${\rm Im}\, v$ separately) that
\begin{equation}
\label{Re-4}
|v|^2_\Omega- (\II \omega | \omega)_{\CC^3}\ge c |v|^2_\Omega.
\end{equation}
We are now ready to show that $\sigma(L_*)\setminus\{0\}\cap i\R=\emptyset.$
Suppose  that $z\in i \R\setminus\{0\}$ is an eigenvalue of \eqref{EV-problem} with corresponding 
eigenfunction $(v,\sa)$.
Then \eqref{Re-3} implies $\nabla v=0$, and hence $v=0$, and this  in turn yields $\omega=0$.
Hence the first line in \eqref{EV-problem} reduces to
$\PP(\sa \times x)=0$, as $z\neq 0$ by assumption.
Therefore, there is a function $p$ such that $\nabla p=\sa \times x$.
Taking the curl results in ${\rm curl}\, (\sa\times x)=-2\sa=0.$
In conclusion, $(v,\sa)=(0,0)$, a contradiction.
\medskip\\
{\bf (d)}
Suppose $\lambda_*=\lambda_3$ and $z\in\C\setminus\{0\},$ 
${\rm Re}\, z>0$, is an eigenvalue of \eqref{EV-problem} with corresponding eigenfunction $(v,\sa)$.
Then  $(\II \sa |(\lambda_*-\II)a)\ge 0$, and \eqref{Re-3}--\eqref{Re-4} 
imply once again $v=0$. The same argument as in part (c) then yields $(v,\sa)=(0,0)$,
a contradiction. 
\medskip\\
{\bf (e)} 
Let $\II={\rm diag}[\mu_1,\mu,\mu_3]$. 
In the following, we assume  that $\lambda_*=\mu_3$.
From the steps (b) and (d) above we know the following facts:
\begin{enumerate}
\setlength\itemsep{1mm}
\item [(i)] Suppose  $\mu_1<\mu<\mu_3$. Then $0$ is an eigenvalue of $-L_*$ of multiplicity one,
with eigenfunction $(0,\sa_*)$, where $\sa_*\in {\sf N}(\lambda_*-\II)={\rm span}\{e_3\}.$
All remaining eigenvalues of $\sigma(-L_*)\setminus\{0\}$ have negative real part.
\item[(ii)] Suppose $\mu_1<\mu=\mu_3$.
Then  $0$ is an eigenvalue of $-L_*$ of multiplicity two,
with eigenfunction $(0,\sa_*)$, where $\sa_*\in {\sf N}(\lambda_*-\II)={\rm span}\{e_2,e_3\}.$
All remaining eigenvalues of $\sigma(-L_*)\setminus\{0\}$ have negative real part.
\end{enumerate}
This shows that as $\mu\uparrow \mu_3$, an eigenvalue $z=z(\mu) $ of $-L_*$ moves from $[{\rm Re}\,z]<0$ to~$0$.
We will prove that this eigenvalue  crosses $0$ along the real axis with positive speed
as $\mu\uparrow \mu_3$.
This implies that $-L_*$ will have a positive eigenvalue   $z(\mu)$ is case $\mu_1<\mu_3<\mu$.

Suppose  $\mu_1<\mu,\mu_3$ and let $\sa_*=\alpha_*e_3$. (Note that $\II \sa_*=\mu_3\sa_*$). 
We will study the eigenvalue problem
\begin{equation}
\label{EV-mu}
z(\mu)\big(v(\mu), \sa(\mu)\big)+L_*(\mu) \big(v(\mu),\sa(\mu)\big)=(0,0)
\end{equation}
for $ \mu \in (\mu_3-\eps_0,\mu_3 +\eps_0)$,
where  $L_*(\mu) =E^{-1}(\mu)A(\mu)$ is defined by replacing $\II$ by $\II(\mu)={\rm diag}\,[\mu_1,\mu,\mu_3]$ 
in~\eqref{def-A_*}. 
Let 
\begin{equation}
\begin{aligned}
G(\mu, (z,v,\sa)):=\big((z+L_*(\mu))(v,\sa), P_3\sa, |\sa|^2-1\big), 
\end{aligned}
\end{equation}
where $(\mu, (z,v,\sa))\in (\mu_3-\eps_0,\mu_3 +\eps_0)\times \big(\R\times {_0H}^2_{q,\sigma}(\Omega)\times\R^3\big)$,
and where $P_3{\sf b}:={\sf b}_3$  for  ${\sf b}=({\sf b}_1,{\sf b}_2,{\sf b}_3)\in \R^3.$ 
As $L_*(\mu_3)(0,e_2)=(0,0)$ (note that $\II(\mu_3)={\rm diag}\,[\mu_1,\mu_3,\mu_3]$) we obtain
$G(\mu_3,(0,0,e_2))=(0,0,0)$
by the results established above.
We will now employ the Implicit Function Theorem.
Computing the Fr\'echet derivative $D_2G$ of $G$ with respect to the variables $(z,v,\sa)$, 
and making use of $L_*(\mu_3)(0,e_2)=(0,0)$, yields
\begin{equation}
D_2G(\mu_3, (0,0,e_2))[\hat z, \hat v, \hat \sa]
= \big(\hat z(0,e_2)+L_*(\mu_3)(\hat v,\hat \sa), P_3\, \hat\sa , 2(\hat\sa | e_2)\big).
\end{equation}
In order to verify that $D_2G(\mu_3, (0,0,e_2))$ is an isomorphism, if suffices to show that
it is injective, as $L_*(\mu_3)$ is a Fredholm operator of index zero.
Assume then that
$$D_2G(\mu_3, (0,0,e_2))[\hat z, \hat v, \hat \sa]=(0,0,0).$$
This implies 
\begin{equation*}
L_*(\mu_3)(\hat v,\hat \sa)=-\hat z(0,e_2),\quad P_3\,\hat\sa =0,\quad (\hat\sa | e_2)=0.
\end{equation*}
As $\hat z(0,e_2)\in {\sf N}(L_*(\mu_3))$, we conclude
 that $(\hat v,\hat \sa)\in N(L_*^2(\mu_3))=N(L_*(\mu_3))$, due to the fact that
 $0$ is a semi-simple eigenvalue of $L_*(\mu_3)$, see part (b).
 Hence, $(\hat v,\hat \sa)=(0, \alpha_2 e_3 +\alpha_3 e_3),$ as 
 $\hat \sa\in {\sf N}(\lambda_* - \II)={\rm span}\{e_2, e_3\}$.
 From $P_3 \hat\sa=0$ follows $\alpha_3=0$, and the condition $(\hat\sa | e_2)=0$ implies $\alpha_2=0$.
 Hence, $(\hat v,\hat\sa)=(0,0)$. 
 Lastly, the condition $\hat z(0,e_2)=L_*(\mu_3)(0,0)=(0,0)$ gives $\hat z=0$.
By the Implicit Function Theorem, 
there exist  smooth functions 
$$[\mu\mapsto (z(\mu),v(\mu),\sa(\mu))]: \BB(\mu_3,\eps_0)\to  \R\times {_0H}^2_{q,\sigma}(\Omega)\times\R^3$$
such that
\begin{equation}
\label{G-mu=0}
\quad G(\mu, (z(\mu),v(\mu),\sa(\mu)))=(0,0,0), \quad \mu\in \BB(\mu_3,\eps_0),
\end{equation}
provided $\eps_0$ is chosen small enough.

Equation~\eqref{G-mu=0} can be stated equivalently as
\begin{equation}
\label{EV-problem-mu}
\begin{aligned}
z(\mu)\big( v(\mu) + \PP((\sa(\mu) -\omega(\mu))\times x)\big) + 2\PP({\sa_*}\times v(\mu)) - \upnu\PP \Delta v(\mu) &=0 &&\text{in} && \Omega, \\
v(\mu) &=0 &&\text{on} && \partial\Omega, \\
z(\mu)\II(\mu)\sa(\mu) + \sa_*\times \II(\mu)\sa(\mu) + (\sa(\mu)-\omega(\mu))\times\II(\mu)\sa_* &=0 &&\text{on}&& \R^3.\\
\end{aligned}
\end{equation}
Here we remind that $\II(\mu)={\rm diag}\,[\mu_1,\mu,\mu_3]$ and
\begin{equation}
\label{condition-mu}
\sa_*=\alpha_* e_3,\quad\II(\mu) \sa_*=\mu_3\sa_* ,  \quad (z(\mu_3),v(\mu_3),\sa(\mu_3))=(0,0,e_2).
\end{equation}
Taking the derivative of the last line of \eqref{EV-problem-mu} with respect to $\mu$ and using \eqref{condition-mu}
 gives
\begin{equation*}
\begin{aligned}
&z^\prime(\mu_3)\II(\mu_3)\sa(\mu_3)+ \sa_*\times (\II^\prime(\mu_3)\sa(\mu_3)+\II(\mu_3)\sa^\prime(\mu_3)) 
+(\sa^\prime (\mu_3)-\omega^\prime (\mu_3))\times\II(\mu_3)\sa_*\\
 &=\mu_3 z^\prime(\mu_3)e_2 + \alpha_* e_3 \times e_2 +  \alpha_* e_3 \times (\II(\mu_3)-\mu_3)\sa^\prime(\mu_3) 
 +\mu_3 \alpha_*e_3\times \omega^\prime(\mu_3)=0.\\
\end{aligned}
\end{equation*}
Observing that $(\II(\mu_3)-\mu_3)\sa^\prime(\mu_3))= (\mu_1-\mu_3)\sa_1^\prime(\mu_3)e_1$ we obtain the relations
\begin{equation}
\label{relation-third-line}
\begin{aligned}
1+\mu_3 \omega_2^\prime (\mu_3)=0, \quad
\mu_3 z^\prime(\mu_3) +\alpha_* (\mu_1-\mu_3)\sa_1^\prime(\mu_3) +\mu_3\alpha_* \omega_1^\prime(\mu_3)=0.
\end{aligned}
\end{equation}
Taking the derivative of the first two lines in \eqref{EV-problem-mu} and  again using~\eqref{condition-mu} results in 
\begin{equation*}
\label{EV-problem-derivative}
\begin{aligned}
z^\prime(\mu_3) \PP (\sa(\mu_3) \times x) + 2\PP({\sa_*}\times v^\prime (\mu_3)) - \upnu\PP \Delta v^\prime(\mu_3) &=0 &&\text{in} && \Omega, \\
v^\prime (\mu_3) &=0 &&\text{on} && \partial\Omega.
\end{aligned}
\end{equation*}
Multiplying the first equation by $v^\prime(\mu_3)$ and integrating over $\Omega$ then gives
\begin{equation*}
\begin{aligned}
0= z^\prime(\mu_3)(\sa(\mu_3)|\omega^\prime(\mu_3)) +\upnu |\nabla v^\prime(\mu_3)|
  = z^\prime(\mu_3) \omega_2^\prime(\mu_3) +\upnu |\nabla v^\prime(\mu_3)|,
\end{aligned}
\end{equation*}
where we used $\sa(\mu_3)=e_2$.
Combining this with the first relation in \eqref{relation-third-line} gives
\begin{equation*}
z^\prime(\mu_3)=\mu_3\upnu  |\nabla v^\prime(\mu_3)|.
\end{equation*}
Hence $z^\prime(\mu_3)>0$, provided we know that $\nabla v^\prime(\mu_3)\neq 0$.
So let us assume for the moment that $\nabla v^\prime(\mu_3)=0.$
Then $v^\prime(\mu_3)=0$ and then also $\omega^\prime (\mu_3)=0$,
contradicting the first relation in \eqref{relation-third-line}.

\smallskip
\noindent
We have, thus, proved that $z^\prime(\mu_3)>0$.
This implies that the eigenvalue $z(\mu)$  crosses  the origin  with positive speed along the real axis at the instant when $\mu=\mu_3$, 
and then will stay positive for $\mu\in (\mu_3,\mu_3+\eps_0)$.
But more is true: $z(\mu)$ exists for any $\mu>\mu_3$ and will remain positive.
(Existence follows from a standard perturbation result for simple eigenvalues).
If $z(\mu)$ was zero for some $\mu >\mu_3-\eps_0$, then necessarily $\mu=\mu_3$ by what was proved in part (b).

Let then $\mu_2>\mu_3$ be fixed, and consider  the ordered triple of
eigenvalues 
$$(\lambda_1,\lambda_2,\lambda_3)=(\mu_1,\mu_3,\mu_2).$$ 
Then the above argument
shows that the equilibrium $(0,\sa_*)\in \{0\}\times {\sf N}(\lambda_*-\II)$ with $\lambda_*=\lambda_2$ is unstable,
and $-L_*$ has exactly one positive eigenvalue, namely $z(\mu_2)$.
Finally, we observe that this assertion still holds true in case $\lambda_1=\lambda_2<\lambda_3$.
\medskip\\
\noindent
{\bf (f)} We will now consider the case where $\II(\mu)={\rm diag}\,[\mu,\mu_2,\mu_3]$ 
with $\mu_3<\mu_2$, $\mu\in (\mu_3-\eps_0,\mu_3+\eps_0)$. 
As in the proof of step (e) we assume $\sa_*=\alpha_*e_3$, so that $\II(\mu) \sa_*=\mu_3\sa_*$.
By the results in step (e) we then know that $-L_*(\mu)$   has $0$ and $z(\mu_2)>0$ as simple eigenvalues,
with  ${\sf N}(L_*(\mu))=\{0\}\times {\sf N}(\lambda_*-\II(\mu))$, while all the remaining eigenvalues are in $[{\rm Re}\,z]<0$
as long as $\mu<\mu_3$.
(This follows from the fact that no eigenvalue can cross the imaginary axis as long as $\mu<\mu_3$).
We can now repeat the arguments of step (e) to show that another eigenvalue $z(\mu)$ will cross the origin along the
real axis and become positive. For this we just ought to replace $e_2$ by $e_1$ in the proof given in step (e).
As a result, we conclude that two eigenvalues are now in $[{\rm Re}\,z]>0$ for any $\mu>\mu_3$.
Fixing $\mu_1> \mu_3$ and setting
$$(\lambda_1,\lambda_2,\lambda_3)=(\mu_3,\min\{\mu_1,\mu_2\},\max\{\mu_1,\mu_2\})$$
yields the assertion.
The proof of Theorem~\ref{th:spectrum} is now complete.
\end{proof}

 \section{Nonlinear Stability}\label{sec:stability}
In this section we will provide a complete characterization of the nonlinear stability properties of the nontrivial equilibria for the given problem. The approach is that of the {\em generalized principle of linearized stability} in \cite[Chapter 5]{PrSi16}.

The following result, which is also valid in the more general context of abstract quasilinear parabolic equations, is of interest in itself.

\begin{proposition} \label{pro: stability-instability}
Suppose $(p,q,\mu)$ and $(p_j,q_j,\mu_j)$  satisfy the assumptions \eqref{assumptions-pq}, \eqref{assumptions-mu},
and \eqref{mu-j},
and suppose $u_*$ is an equilibrium of \eqref{eq:evolution}.
Then the following properties hold.
\begin{enumerate}
\setlength\itemsep{1mm}
\item[{\bf (a)}] $u_*$ is stable in the topology of $B^{2\mu-2/p}_{qp}(\Omega)\times\R^3  \Longleftrightarrow$  
$u_*$  is stable in the topology of $B^{2-2/p}_{qp}(\Omega)\times\R^3$. 
\item[{\bf (b)}] $u_*$ is stable in the topology of $B^{2\mu_1-2/p_1}_{q_1p_1}(\Omega)\times\R^3  \Longleftrightarrow$  $u_*$ 
is stable in the topology of $B^{2\mu_2-2/p_2}_{q_2p_2}(\Omega)\times\R^3$. 

\end{enumerate}
\end{proposition}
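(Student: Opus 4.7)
The strategy is to use parabolic smoothing and continuous dependence from Theorem~\ref{th:local_strong} to transfer stability between topologies with different regularity. The crucial ingredient is the embedding
\[
\EE_{1,\mu}(0,T)\hookrightarrow C([t_0,T];B^{2-2/p}_{qp}(\Omega)\times\R^3), \qquad 0<t_0<T,
\]
together with the Lipschitz bound~\eqref{eq:continuous_dep}, which yield a quantitative smoothing estimate
\[
|u(t_0,u_0)-u_*|_{B^{2-2/p}_{qp}\times\R^3}\leq C(t_0)\,|u_0-u_*|_{X_{\gamma,\mu}}
\]
valid for $u_0$ in a fixed small $X_{\gamma,\mu}$-neighborhood of $u_*$. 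This bound lets us upgrade closeness in the weaker topology at one time into closeness in the stronger topology at a slightly later time.

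For part (a), to show that stability in $B^{2-2/p}_{qp}\times\R^3$ implies stability in $B^{2\mu-2/p}_{qp}\times\R^3$, I would fix $\varepsilon>0$ and pick the corresponding $\delta_1>0$ from the strong stability. For $u_0$ sufficiently close to $u_*$ in $B^{2\mu-2/p}_{qp}$, the smoothing estimate above gives $|u(t_0)-u_*|_{B^{2-2/p}_{qp}}<\delta_1$ for a fixed small $t_0>0$, so strong stability controls the trajectory for $t\geq t_0$, while the interval $[0,t_0]$ is handled by continuous dependence in the weak topology (using the embedding $\EE_{1,\mu}\hookrightarrow C([0,T];B^{2\mu-2/p}_{qp})$). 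For the converse, suppose weak stability holds. Given initial data close to $u_*$ in $B^{2-2/p}_{qp}$, the embedding $B^{2-2/p}_{qp}\hookrightarrow B^{2\mu-2/p}_{qp}$ and weak stability trap the trajectory in a small $B^{2\mu-2/p}_{qp}$-ball around $u_*$ for all $t\geq 0$. Applying the smoothing bound on sliding windows $[t,t+t_0]$ with $u(t)$ as initial datum (which is permissible by autonomy of the equation and the uniformity of the local theory near $u_*$) yields
\[
|u(t+t_0)-u_*|_{B^{2-2/p}_{qp}}\leq C(t_0)\,|u(t)-u_*|_{B^{2\mu-2/p}_{qp}},\qquad t\geq 0,
\]
with a constant independent of $t$; this gives the desired uniform bound for $s\geq t_0$, while $[0,t_0]$ is controlled directly by continuous dependence in the strong topology.

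Part~(b) follows from the same mechanism applied to the two parameter sets $(p_j,q_j,\mu_j)$. By Theorem~\ref{th:local_strong}(b), the initial value problem yields a single solution lying simultaneously in both regularity classes, so the Sobolev embedding between the corresponding trace spaces together with the sliding-window smoothing estimate exchanges stability between the two topologies in each direction.

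The main obstacle is the uniformity in $t$ of the constants appearing in the sliding-window argument. This is resolved as follows: once weak stability confines the trajectory to a fixed small neighborhood $\mathcal{U}$ of $u_*$, Theorem~\ref{th:local_strong}(a) applies at every point $u(t)\in\mathcal{U}$ with a common local existence time and common Lipschitz constant; the autonomy of~\eqref{eq:evolution} then makes the smoothing estimate genuinely independent of $t$. Global existence, which is part of the stability assertion, is ensured by Theorem~\ref{th:local_strong}(d) together with the compact embedding $X_1\hookrightarrow X_0$ and the fact that the trajectory stays in a bounded set of~$X_{\gamma,\mu}$.
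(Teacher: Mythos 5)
Your argument is correct and follows essentially the same route as the paper's proof: the quantitative smoothing estimate from \eqref{eq:continuous_dep} combined with the instantaneous-regularization embedding of $\EE_{1,\mu}$ into $C([t_0,T];X_{\gamma,1})$, a sliding-window (in the paper, inductive) iteration for the uniform-in-time transfer in part (a), and Theorem~\ref{th:local_strong}(b) plus the embeddings between trace spaces for part (b). The only point to phrase carefully is the uniformity of the constants: one obtains the common existence time and Lipschitz constant by applying Theorem~\ref{th:local_strong}(a) once, centered at $u_*$, so that they serve all initial values in the fixed ball $B_{X_{\gamma,\mu}}(u_*,\eta(u_*))$ --- which is what your neighborhood $\mathcal{U}$ amounts to.
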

\begin{proof}
By Theorem~\ref{th:local_strong}(a), there are positive constants  $\tau=\tau(u_*)$, $\eta=\eta(u_*)$ and $c_1=c_1(u_*)$, corresponding to the
initial value $u_*$, such that
\begin{equation}\label{AA}
\|u(\cdot, w_0)-u_*\|_{\EE_{1,\nu}(0,2\tau)}\le c_1 |w_0-u_*|_{X_{\gamma,\nu}},
\quad \nu\in\{\mu,1\},
\end{equation}
for any $w_0\in B_{X_{\gamma,\nu}}(u_*,\eta)$.
Moreover, there is a constant $c_2=c_2(\tau,\mu)$ such that
\begin{equation}\label{AB}
\| u - u_*\|_{BC([\tau,2\tau], X_{\gamma,1})} \le c_2 \| u -u_*\|_{\EE_{1,\mu}[0,2\tau]} 
\end{equation}
for any function $u\in \EE_{1,\mu}(0,2\tau)$, see for instance \cite[page 228]{PrSi16}.
For later use we denote the embedding constant of $X_{\gamma,1}\hookrightarrow X_{\gamma,\mu}$
by $c_\mu$. Consequently,
\begin{equation}\label{AC}
B_{X_{\gamma,1}}(u_*, \alpha)\subset B_{X_{\gamma,\mu}}(u_*, c_\mu\alpha).
\end{equation}
\medskip\noindent
{\bf (a)}
Suppose $u_*$ is stable in $X_{\gamma,\mu}$. 
Let $\eps>0$ be given and set $\eps_\mu:=\min\{\eps/(c_1c_2),\eta\}.$
By assumption, there is a number $\delta_\mu$ 
such that every solution of \eqref{eq:evolution} with initial value $u_0\in B_{X_{\gamma,\mu}}(u_*,\delta_\mu)$
exists globally and satisfies
\begin{equation}\label{AD}
 |u(t,u_0)-u_*|_{X_{\gamma,\mu}}<\eps_\mu, \quad\text{for all} \;\ t\ge 0.
 \end{equation}
Next, we chose $\delta \in (0,\delta_\mu/c_\mu]$ sufficiently small such that
\begin{equation}\label{AE}
|u(t,u_0)-u_*|_{X_{\gamma,1}}< \eps,\quad\text{for all} \;\; t\in [0,\tau],\;\; u_0\in B_{X_{\gamma,1}}(0,\delta).
\end{equation}
Here we note that \eqref{AE} follows from continuous dependence on the initial data, see \eqref{AA}.
As a consequence of \eqref{AC}, every solution $u(\cdot, u_0)$ with 
$u_0\in B_{X_{\gamma,1}}(u_*,\delta)$ exists globally
and satisfies \eqref{AD} as well as \eqref{AE}.
Next we will show by recursion that
 $u(t,u_0)\in B_{X_{\gamma,1}}(u_*,\eps)$ for all $t\ge 0$.
 Suppose we have already shown that
 $|u(t,u_0)-u_*|_{X_{\gamma,1}}< \epsilon$ for $t\in [0,(k+1)\tau]$ and $k\in\N_0.$
 We note that the case $k=0$ is exactly \eqref{AE}.
From the definition of $\eps_\mu$ and \eqref{AA}-\eqref{AB} as well as \eqref{AD} follows
 \begin{equation}
 |u(k\tau +s,u_0)-u_*|_{X_{\gamma,1}}\le c_1c_2 |u(k\tau,u_0) -u_*|_{X_{\gamma,\mu}}<\eps,\quad \tau\le s\le 2\tau. 
 \end{equation}
 Since this step works for any $k\in\N$, we obtain stability of $u_*$ in $X_{\gamma,1}$.
 \medskip\\
 \noindent
Suppose that $u_*$ is stable in $X_{\gamma,1}$. Let $\eps>0$ be given
 and set $\eps_1= \eps/c_\mu$.  By the stability assumption, there exists a number $\delta_1$ such that
every solution of \eqref{eq:evolution} with initial value $u_0\in B_{X_{\gamma,1}}(u_*,\delta_1)$
exists globally and satisfies
\begin{equation}\label{AF}
 |u(t,u_0)-u_*|_{X_{\gamma,1}}<\eps_1, \quad\text{for all} \;\ t\ge 0.
 \end{equation}
Next, by continuous dependence on initial data, we can chose $\delta\in (0,\eta)$ sufficiently small such that
\begin{equation}\label{AG}
|u(t,u_0)-u_*|_{X_{\gamma,\mu}}< \delta_1/(c_1c_2),\quad\text{for all} \;\; t\in [0,\tau],\;\; u_0\in B_{X_{\gamma,\mu}}(u_*,\delta).
\end{equation}
It follows from \eqref{AA}-\eqref{AB} and \eqref{AG} that
$|u(\tau,u_0) - u_*|_{X_{\gamma,1}} \le c_1c_2 |u_0-u_*|_{X_{\gamma,\mu}}<\delta_1$,
for all  $u_0\in B_{X_{\gamma,1}}(u_*,\delta)$.
Hence, by \eqref{AC} and \eqref{AF},
 $u(t,u_0)\in B_{X_{\gamma,\mu}}(u_*,\eps)$ for
any initial value $u_0\in B_{X_{\gamma,1}}(u_*,\delta)$.

\medskip\noindent
{\bf (b)} As the arguments are similar to part (a), we will only sketch the proof.
Suppose $u_*$ is stable in the topology of $B^{2\mu_1-2/p_1}_{q_1p_1}(\Omega)\times\R^3 $.
Let 
$$K_1: {_0B}^{2\mu_1-2/p_1}_{q_1p_1,\sigma}(\Omega)\times\R^3 
\hookrightarrow {_0B}^{2\mu_2-2/p_2}_{q_2p_2,\sigma}(\Omega)\times\R^3$$
be the embedding map, and $k_1$ its norm.
By continuous dependence on initial data, for any  $\delta^\prime>0$ there exists  $\delta^{\prime \prime}>0$  and $\tau>0$ 
such that $u(t,u_0)$ lies in $B_2(u_*,\delta^\prime)$
for all $t\in [0,\tau]$ and all initial values $u_0$ in $B_2(u_*,\delta^{\prime\prime})$,
where $B_2(u_*,\alpha)$ denotes the ball of radius $\alpha$ with respect to the topology of 
${B}^{2\mu_2-2/p_2}_{q_2p_2}(\Omega)\times\R^3$.
By  \eqref{AA}--\eqref{AB}, see also \eqref{EE-j}, the mapping
$$K_2: B_2(u_*,\delta^\prime)\to \EE_{1,\mu_2}(0,\tau)\to {_0B}^{2-2/p_2}_{q_2p_2,\sigma}(\Omega)\times\R^3,\quad 
u_0\mapsto u(\cdot,u_0)\mapsto u(\tau,u_0)
$$ 
is continuous, and the embedding $K_3: {_0B}^{2-2/p_2}_{q_2p_2,\sigma}(\Omega)\times\R^3
\hookrightarrow {_0B}^{2\mu_1-2/p_1}_{q_1p_1,\sigma}(\Omega)\times\R^3$
is so as well.
Let $\eps>0$ be given, choose $\eps_1:=\eps/k_1$ and let $\delta_1\in (0,\eps_1)$ be so that
the stability condition holds for all $w_0$ in $B_1(u_*,\delta_1)$, where 
$B_1(u_*,\eps_1)$ is the ball  with radius $\eps_1$ defined by the topology of 
$B^{2\mu_1-2/p_1}_{q_1p_1}(\Omega)\times\R^3 $.  
Using the properties of  $K_i$, one shows that there exists a number $\delta>0$ such that
$u(\tau,u_0)\in B_1(u_*,\delta_1)$ for all $u_0\in B_2(u_0,\delta)$.
Therefore, $u(s, u(\tau,u_0))$ exists for all $s\ge 0$ and stays in $B_1(0,\eps_1)$,
and stability in ${_0B}^{2\mu_2-2/p_2}_{q_2p_2,\sigma}(\Omega)\times \R^3$ follows.

\medskip\noindent
Lastly, suppose $u_*$ is stable in the topology of $B^{2\mu_2-2/p_2}_{q_2p_2}(\Omega)\times\R^3 $.
By part (a), we conclude that $u_*$ is also stable in the  
topology of $B^{2-2/p_2}_{q_2p_2}(\Omega)\times\R^3 $.
Stability in the topology of $B^{2\mu_1-2/p_1}_{q_1p_1}(\Omega)\times\R^3 $
can then be obtained from the continuity of the mapping 
\begin{equation*}
\begin{aligned}
 &B_1(u_*,\delta_1)  \to  B_2(u_*,k_1\delta_1)  \to  \EE_{1,\mu_2}(0,\tau)  \to  {_0B}^{2-2/p_2}_{q_2p_2,\sigma}(\Omega)\times\R^3,\\
& u_0  \mapsto  u(\cdot,u_0)  \mapsto  u  (\tau,u_0),
 \end{aligned}
 \end{equation*}
by similar arguments as above.
\end{proof}

\begin{theorem}\label{th:stability-instability}
Suppose $p,q$ and $\mu$ satisfy the assumptions of Theorem~\ref{th:local_strong}.
Let $(0,\sa_*)\in \cE$ with $\sa_*\neq 0$ be an equilibrium of~\eqref{problem},
and recall that $\sa_*\in {\sf N}(\lambda_*-\II)$. 
Then the following statements hold. 
\begin{itemize}
\setlength\itemsep{1mm}
\item[(i)] If $\lambda_*=\max\{\lambda_1,\lambda_2,\lambda_3\}$, then $(0,\sa_*)$ is stable
in ${_0B}^{2\mu-2/p}_{qp,\sigma}(\Omega)\times \R^3$. 
Moreover, there exists $\delta>0$ such that the unique solution $(v(t),\sa(t))$ of \eqref{eq:evolution} with initial value $(v_0,\sa_0) \in {_0B}^{2\mu-2/p}_{qp,\sigma}(\Omega)\times \R^3$ satisfying 
\[
|(v_0, \sa_0-\sa_*)|_{B^{2\mu-2/p}_{qp}(\Omega)\times\R^3}<\delta
\]
exists on $\R_+$ and converges exponentially fast 
to some $(0,\bar\sa)\in \cE$
in the topology of  
$$H_q^{2\alpha}(\Omega)\times \R^3,\quad\text{for any $\alpha\in [0,1)$}.$$
\item[(ii)] If $\lambda_*\ne\max\{\lambda_1,\lambda_2,\lambda_3\}$, then $(0,\sa_*)$ is unstable in 
${_0B}^{2\mu-2/p}_{qp,\sigma}(\Omega)\times \R^3.$ 
\end{itemize}
\end{theorem}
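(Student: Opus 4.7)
I would derive Theorem~\ref{th:stability-instability} as a direct application of the generalized principle of linearized stability from \cite{PrSiZa09} and \cite[Chapter~5]{PrSi16}, feeding into it the structural ingredients of Sections~\ref{se:local well-posedness and critical spaces} and~\ref{se:spectrum}. The proof reduces to checking the manifold and spectral hypotheses of the abstract theorems and then transferring the conclusion between admissible topologies via Proposition~\ref{pro: stability-instability}.

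\textbf{Verifying the hypotheses.} Near $(0,\sa_*)$, the set $\cE$ described in \eqref{equilibria1}--\eqref{equilibria2} is a $C^\infty$-manifold of dimension $m=\dim\mathsf{N}(\lambda_*-\II)\in\{1,2,3\}$. By Theorem~\ref{th:spectrum}(b), $\mathsf{N}(L_*)=\{0\}\times\mathsf{N}(\lambda_*-\II)$ is exactly the tangent space to $\cE$ at the equilibrium, with $0$ a semi-simple eigenvalue. Proposition~\ref{pro:L*-calculus} yields $\eta+L_*\in\mathcal{H}^\infty(X_0)$ with angle less than $\pi/2$, hence maximal $L_p$-regularity in the time-weighted spaces $\EE_{1,\mu}(0,T)$; Theorem~\ref{th:spectrum}(c) rules out nonzero spectrum on $i\R$. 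Together with parts (d)--(f) of that theorem, this places $(0,\sa_*)$ in the \emph{normally stable} regime when $\lambda_*=\max\{\lambda_1,\lambda_2,\lambda_3\}$, and in the \emph{normally hyperbolic} regime otherwise.

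\textbf{Conclusions.} For (i), \cite[Theorem~5.3.1]{PrSi16} applied in the trace space $X_{\gamma,\mu}={_0B}^{2\mu-2/p}_{qp,\sigma}(\Omega)\times\R^3$ provides $\delta>0$ such that every solution with $|(v_0,\sa_0-\sa_*)|_{X_{\gamma,\mu}}<\delta$ exists globally and converges at an exponential rate in $X_{\gamma,\mu}$ to some nearby equilibrium $(0,\bar\sa)\in\cE$. To upgrade this to exponential convergence in $H^{2\alpha}_q$ for every $\alpha\in[0,1)$, I would use the instantaneous regularization of Theorem~\ref{th:local_strong}(c) to deposit $u(t_0)$ in $X_{\gamma,1}$ for some small $t_0>0$, then rerun the linear contraction argument on $[t_0,\infty)$ in time-weighted spaces with exponential weight $e^{\omega t}$ for $0<\omega<-\sup\{\mathrm{Re}\,z:z\in\sigma(L_*)\setminus\{0\}\}$; the mixed-derivative embedding then yields exponential decay of $u-(0,\bar\sa)$ in $[X_0,X_1]_\alpha\supset H^{2\alpha}_q(\Omega)\times\R^3$ for the full range of $\alpha$. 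For (ii), Theorem~\ref{th:spectrum}(e)--(f) furnishes at least one eigenvalue of $-L_*$ with positive real part, so \cite[Theorem~5.4.1]{PrSi16} gives instability of $(0,\sa_*)$ in $X_{\gamma,\mu}$. In both cases, Proposition~\ref{pro: stability-instability} removes any dependence on the particular admissible triple $(p,q,\mu)$.

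\textbf{Main obstacle.} Beyond careful bookkeeping of the triples $(p,q,\mu)$, the one nontrivial step is the upgrade from trace-norm convergence to exponential convergence in every $H^{2\alpha}_q$, $\alpha\in[0,1)$. This is precisely where the time-weighted maximal-$L_p$-regularity framework of Section~\ref{se:local well-posedness and critical spaces} is essential: once parabolic smoothing has placed the trajectory in $X_{\gamma,1}$, the linear estimates provided by the $\mathcal{H}^\infty$-calculus of $\eta+L_*$ can be rerun with an exponential weight adapted to the spectral gap of $L_*$ transverse to $\mathsf{N}(L_*)$, and complex interpolation delivers the decay in all intermediate spaces. No genuinely new analytic difficulty arises beyond what Sections~\ref{se:local well-posedness and critical spaces} and~\ref{se:spectrum} have already resolved.
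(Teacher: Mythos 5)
Your overall strategy---normal stability/hyperbolicity from Theorem~\ref{th:spectrum}, maximal regularity from Proposition~\ref{pro:L*-calculus}, and the generalized principle of linearized stability---is the same as the paper's, but there is a genuine gap in \emph{where} you apply the abstract theorems. You invoke \cite[Theorems 5.3.1 and 5.4.1]{PrSi16} directly in the trace space $X_{\gamma,\mu}={_0B}^{2\mu-2/p}_{qp,\sigma}(\Omega)\times\R^3$. Those theorems are formulated in the unweighted trace space and require the nonlinearity to be (locally Lipschitz) well defined from a neighborhood in that space into $X_0$; here $F=G(\cdot,\cdot)$ is only controlled as a map $X_\beta\times X_\beta\to X_0$ with $\beta=\tfrac14\bigl(1+\tfrac3q\bigr)$, so one needs the trace space to embed into $X_\beta$. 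For general admissible $(p,q,\mu)$ satisfying \eqref{assumptions-pq}--\eqref{assumptions-mu} this fails: at the critical weight, $X_{\gamma,\mu_{\rm crit}}={_0B}^{3/q-1}_{qp,\sigma}(\Omega)\times\R^3$ does not embed into $X_\beta={_0H}^{2\beta}_{q,\sigma}(\Omega)\times\R^3$ for any $q\in(1,3)$ (this would require $3/q-1\ge\tfrac12(1+3/q)$, i.e.\ $q\le 1$). So the direct application of the abstract stability/instability theorems in $X_{\gamma,\mu}$ is unjustified. The paper avoids this precisely by the mechanism you mention only as an afterthought: fix $\alpha\in(0,1)$, choose $p_2$ so large that $1/p_2<1-\beta$ and $B^{2-2/p_2}_{qp_2}(\Omega)\hookrightarrow H^{2\beta}_q(\Omega)\cap H^{2\alpha}_q(\Omega)$; then $F\in C^{1-}(X_{\gamma,1},X_0)$ for the triple $(p_2,q,1)$, \cite[Theorem 5.3.1]{PrSi16} (resp.\ 5.4.1) applies in that setting, and the conclusion is carried back to the $X_{\gamma,\mu}$-topology by Proposition~\ref{pro: stability-instability} (for instability, by contradiction) together with the smoothing and continuous-dependence estimates \eqref{AA}--\eqref{AB}, which place the orbit of any datum that is $X_{\gamma,\mu}$-close to $(0,\sa_*)$ into a small $X_{\gamma,1}$-ball at a positive time $\tau$.

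This also shows that your proposed ``upgrade'' step is both unnecessary and delicate as stated: rerunning the fixed-point argument around $(0,\sa_*)$ with an exponential weight $e^{\omega t}$ cannot work naively, since $0\in\sigma(L_*)$ and the solution converges to a \emph{different} equilibrium $(0,\bar\sa)$ in general; to make it rigorous you would have to redo the normal-form splitting along ${\sf N}(L_*)$, i.e.\ re-prove the normally stable theorem. In the paper the exponential convergence in $H^{2\alpha}_q(\Omega)\times\R^3$ comes for free: \cite[Theorem 5.3.1]{PrSi16}, applied for the triple $(p_2,q,1)$, already yields exponential convergence in $X_{\gamma,1}={_0B}^{2-2/p_2}_{qp_2,\sigma}(\Omega)\times\R^3$, and this space embeds into $H^{2\alpha}_q(\Omega)\times\R^3$ by the choice of $p_2$. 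A minor slip in the same step: the inclusion $[X_0,X_1]_\alpha\supset H^{2\alpha}_q(\Omega)\times\R^3$ is written the wrong way around; one has $[X_0,X_1]_\alpha={_0H}^{2\alpha}_{q,\sigma}(\Omega)\times\R^3\subset H^{2\alpha}_q(\Omega)\times\R^3$, which is in fact the direction needed to conclude decay of the $H^{2\alpha}_q$-norm.
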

\begin{proof}
Recall that the nonlinearity $F$ of equation~\eqref{eq:evolution} has the bilinear structure $F(u)=G(u,u)$ given in \eqref{eq:bilinear_nonlinearity}, with $G:X_\beta\times X_\beta\to X_0$  bounded for  $\beta=\frac{1}{4}\big(1+\frac{3}{q}\big).$  
Let $\alpha\in (0,1)$ be given and choose $p_2$ large enough so that
$$B^{2-2/p_2}_{qp_2}(\Omega)\hookrightarrow H^{2\beta}_q(\Omega)\cap H^{2\alpha}_q(\Omega),
\quad 1/p_2<1-\beta.$$
Hence we conclude that 
$X_{\gamma,1}=(X_0,X_1)_{1-1/p,p}\hookrightarrow X_\beta,$
and $F\in C^{1-}(X_{\gamma,1},X_0)$.
Moreover, using Proposition \ref{pro:L*-calculus}, we infer that $L$ has the property of maximal $L_p$-regularity. 
With $\mu_2=\mu-(1/p-1/p_2)$ 
the stability result in (i) follows from Theorem~\ref{th:spectrum}, Proposition~\ref{pro: stability-instability},
and  \cite[Theorem 5.3.1]{PrSi16},  since $(0,\sa_*)$ is  normally stable.
In addition, by \eqref{AA}--\eqref{AB}, we conclude that for every $\delta^\prime$ 
there are positive numbers $\delta,\tau$, such that
$| u(\tau,u_0)-u_*|_{X_{\gamma,1}}<\delta^\prime$ for all  $ |u_0-u_*|_{X_{\gamma,\mu}}<\delta.$
The remaining assertion in (i) then follows from \cite[Theorem 5.3.1]{PrSi16}.

By  Theorem \ref{th:spectrum} and \cite[Theorem 5.4.1]{PrSi16} we know that
$(0,\sa_*)$ is unstable in ${_0B}^{2-2/p_2}_{qp_2,\sigma}(\Omega)\times \R^3$.
But then $(0,\sa_*)$ is also unstable in ${_0B}^{2\mu-2/p}_{qp,\sigma}(\Omega)\times \R^3$,
as we would otherwise obtain a contradiction by  Proposition~\ref{pro: stability-instability}.
\end{proof}
\begin{remarks}
{\bf (a)} Choosing $\mu=\mu_c$ in Theorem~\ref{th:stability-instability}, 
we obtain the assertions of the theorem for initial values
$(v_0,\sa_0)\in {_0B}^{3/q-1}_{qp,\sigma}(\Omega)\times\R^3.$

\smallskip\noindent
{\bf (b)} Choosing $q=q=2$ in Theorem~\ref{th:stability-instability}, 
we obtain the assertions of the theorem for initial values
 ${_0H}^{2\alpha}_{2,\sigma}(\Omega)\times\R^3$ for $\alpha\in [1/4, 1/2]$, where
$${_0H}^{1/2}_{2,\sigma}(\Omega):=(L_{2,\sigma}(\Omega), {_0H}^2_{2,\sigma}(\Omega))_{1/4,2}.$$
\end{remarks}

\goodbreak

\section{Long-time behavior}\label{se:long-time}
We conclude our paper by analyzing the long-time behavior of Leray-Hopf weak solutions. 
Let us first recall the definition of weak solution. 

\begin{defi}
The couple $(v,\sa)$ is a {\em weak solution} \`a la Leray-Hopf of \eqref{problem} if the following conditions are satisfied. 
\begin{enumerate}
\setlength\itemsep{1mm}
\item $v\in C_w ([0,\infty);L_{2,\sigma}(\Omega))\cap L_\infty((0,\infty);L_{2,\sigma}(\Omega))
\cap L_2((0,\infty);{_0H}^1_2(\Omega))$. 
\item $\sa\in C^0([0,\infty))\cap C^1((0,\infty))$.  
\item $(v,\sa)$ satisfies the equations of motions \eqref{problem} in the distributional sense and the boundary condition in the trace sense;
\item The {\em strong energy inequality} holds: 
\[
{\sf E}(v(t),\sa(t))+\upnu \int^t_s|\nabla v(\tau)|_\Omega^2\; d\tau\le{\sf E}(v(s),\sa(s)),
\]
for all $t\ge s$ and a.a. $s\ge 0$ including $s=0$. 
\end{enumerate}
\end{defi}

The class of the above solutions is nonempty for initial data having finite kinetic energy. 
Moreover, such weak solutions become strong after a sufficiently large time and the fluid relative velocity tends to zero (in a suitable topology) as time approaches infinity. We summarize all these results in the following theorem.

\begin{theorem}\label{th:weak-strong}
For any initial value $(v_0, {\sf a}_0)\in L_{2,\sigma}(\Omega)\times \R^3$, 
there exists at least one weak solution $(v,\sa)$ \`a la Leray-Hopf. 
Moreover, there exists a time $\tau>0$  such that  
\begin{equation*}
\begin{split}
v\in H^1_{p,\,loc}([\tau,\infty);L_{q,\sigma}(\Omega))
\cap L_{p,\,loc}([\tau,\infty);{_0H}^2_{q,\sigma}(\Omega)),\quad \sa\in C^1([\tau,\infty);\R^3),
\end{split}\end{equation*} 
for each $p\in [2,\infty)$, $q\in (1,3)$.
 Finally, $v\in C([\tau,\infty);B^{2-2/p}_{qp}(\Omega))$, and 
\begin{equation}\label{eq:bvto0}
\lim_{t\to \infty}|v(t)|_{H^{2\alpha}_q}=0,\quad \text{for any }\alpha\in[0,1).
\end{equation}
\end{theorem}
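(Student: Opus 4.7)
The plan is to bootstrap in two stages. First, invoke the existing $L_2$-theory to obtain interior regularity at some intermediate time $\tau_0>0$; then feed the resulting data into Theorem~\ref{th:local_strong} to lift to the $L_p$-$L_q$ scale; finally, combine boundedness of the lifted trajectory with the $H^1_2$-decay known from \cite{DGMZ16} to obtain convergence in $H^{2\alpha}_q$ by interpolation.

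\textbf{Step 1 (Existence).} For initial data $(v_0,\sa_0)\in L_{2,\sigma}(\Omega)\times\R^3$, the existence of a Leray--Hopf weak solution is already proved in \cite{SiTa,Ma12}; I would simply quote this result.

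\textbf{Step 2 (Regularity in the $L_2$-framework).} From \cite{DGMZ16} one knows that there exists $\tau_0>0$ after which the weak solution becomes strong in the $L_2$-setting, i.e.
\[
v\in H^1_2(\tau_0,T;L_{2,\sigma}(\Omega))\cap L_2(\tau_0,T;{_0H}^2_{2,\sigma}(\Omega)),\qquad \sa\in C^1([\tau_0,T]),
\]
for every $T>\tau_0$, and moreover $|v(t)|_{H^1_2}\to 0$ as $t\to\infty$. By the standard trace embedding of the maximal-regularity class, $v(\tau_0)\in {_0H}^1_{2,\sigma}(\Omega)$.

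\textbf{Step 3 (Lift to $L_p$-$L_q$).} Taking $(v(\tau_0),\sa(\tau_0))\in {_0H}^1_{2,\sigma}(\Omega)\times\R^3$ as initial datum, apply Remark~\ref{remark-2}(c) (Theorem~\ref{th:local_strong}(b) with $(p_1,q_1,\mu_1)=(2,2,1)$) to obtain, for each admissible pair $(p,q)\in[2,\infty)\times(1,3)$, a local strong solution $(\tilde v,\tilde\sa)$ on $[\tau_0,\tau_0+T')$ in the time-weighted class with $\mu=1/p+3/(2q)-1/4$.

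\textbf{Step 4 (Weak--strong agreement).} I would show that $(\tilde v,\tilde\sa)\equiv (v,\sa)$ on their common interval of existence. The strong solution lies, by Theorem~\ref{th:local_strong}(c), in $C((\tau_0,\tau_0+T');B^{2-2/p}_{qp})$, which for suitable $p,q$ embeds into a Prodi--Serrin-type space; this lets one bound the difference $w=v-\tilde v$ (together with $\sa-\tilde\sa$) via the energy identity and Grönwall, exactly as in the standard weak--strong uniqueness argument for Navier--Stokes. The extra bilinear terms coming from $(\sa-\omega)\times v$ and $(\sa-\omega)\times\II\sa$ are of lower order and are absorbed via Lemma~\ref{lem:positive_def} and boundedness of $|\sa|$ (from conservation of angular momentum).

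\textbf{Step 5 (Global extension).} Apply Theorem~\ref{th:local_strong}(d): global existence on $[\tau_0,\infty)$ follows once relative compactness of $\{(v(t),\sa(t)):t\geq \tau_0\}$ in $B^{2\mu-2/p}_{qp}(\Omega)\times\R^3$ is verified. Angular-momentum conservation handles $\sa$; for $v$, the convergence $v(t)\to 0$ in $H^1_2$ from Step~2 gives a relatively compact set in ${_0H}^1_{2,\sigma}(\Omega)$, and then the smoothing from Theorem~\ref{th:local_strong}(c) promotes this to relative compactness in $B^{2-2/p}_{qp}(\Omega)\hookrightarrow B^{2\mu-2/p}_{qp}(\Omega)$ after a short waiting time $\delta>0$; set $\tau:=\tau_0+\delta$.

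\textbf{Step 6 (Continuity and convergence).} The continuity $v\in C([\tau,\infty);B^{2-2/p}_{qp}(\Omega))$ is exactly the trace regularity of the maximal-regularity class with $\mu=1$ on subintervals separated from $\tau$. For the final decay, fix $\alpha\in[0,1)$ and $q\in(1,3)$ and choose $p$ so large that $\alpha<1-1/p$. By complex/real interpolation between $L_q(\Omega)$ and $B^{2-2/p}_{qp}(\Omega)$ there exist $\theta\in[0,1)$ and a constant $C$ with
\[
|v(t)|_{H^{2\alpha}_q}\le C\,|v(t)|_{L_q}^{1-\theta}\,|v(t)|_{B^{2-2/p}_{qp}}^{\theta}.
\]
Sobolev embedding $H^1_2(\Omega)\hookrightarrow L_q(\Omega)$ (valid since $q<3<6$) together with Step~2 gives $|v(t)|_{L_q}\to 0$, while the trajectory bound from Step~5 controls the second factor. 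This yields \eqref{eq:bvto0}.

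\textbf{Main obstacle.} The genuinely non-routine step is Step~4, the weak--strong uniqueness for the coupled PDE--ODE system: one must ensure that the strong solution, which is produced in a time-weighted class at the critical scale, has just enough integrability of $\nabla v$ (or $v$ in an $L_s(L_r)$ with $2/s+3/r=1$) to dominate the quadratic convection $v\cdot\nabla v$ in the difference equation while the rigid-body cross-products are absorbed by the compatibility identities in Section~\ref{se:energy and equilibria}. Steps~5 and~6 are technical but conceptually straightforward.
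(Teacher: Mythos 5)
Your skeleton matches the paper's proof almost step for step: existence from \cite{SiTa,Ma12}; the $L_2$-regularization and $H^1_2$-decay after some $t_0$ from \cite{DGMZ16}; lifting via Theorem~\ref{th:local_strong}(b) (Remark~\ref{remark-2}(c)) with $(p_1,q_1,\mu_1)=(2,2,1)$ and $\mu_2=1/p+3/(2q)-1/4$; global extension through relative compactness and Theorem~\ref{th:local_strong}(d); and the final decay by interpolating a decaying weaker norm against a uniform stronger bound. The one place where you genuinely diverge is Step~4, and there you have made the problem harder than it is: no Prodi--Serrin weak--strong uniqueness argument is needed. The very result you quote in Step~2 already puts the weak solution itself in the $L_2$ maximal-regularity class $H^1_{2,loc}([t_0,\infty);L_{2,\sigma})\cap L_{2,loc}([t_0,\infty);{_0H}^2_{2,\sigma})$ with $\sa\in C^1$, i.e.\ it is a strong solution of \eqref{eq:evolution} in exactly the class covered by Theorem~\ref{th:local_strong} with $p=q=2$, $\mu=1$ and initial value $(v(t_0),\sa(t_0))\in {_0H}^1_{2,\sigma}(\Omega)\times\R^3$. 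Since the solution constructed in Step~3 also lies in that $L_2$-class (it is built in the intersection class of Theorem~\ref{th:local_strong}(b)), the uniqueness assertion of Theorem~\ref{th:local_strong} identifies the two at once; this is precisely the paper's argument, and it avoids having to verify Serrin-type integrability for a solution produced in a time-weighted critical class and to rerun the energy/Gr\"onwall computation for the coupled PDE--ODE system. Your route could probably be pushed through, but as written it is only a sketch of what you yourself call the main obstacle, while the obstacle disappears with the observation above.

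Two smaller points. In Steps~5--6, for Theorem~\ref{th:local_strong}(d) you only need relative compactness of the orbit in $X_{\gamma,\mu}=B^{2\mu-2/p}_{qp}(\Omega)\times\R^3$, and this follows directly from $v\in C([t_0,\infty);H^1_2)$, $|v(t)|_{H^1_2}\to 0$, the embedding $H^1_2(\Omega)\hookrightarrow B^{2\mu-2/p}_{qp}(\Omega)$ (an equality case of Sobolev embedding for the chosen $\mu$), and conservation of angular momentum; the detour through compactness in $B^{2-2/p}_{qp}$ before global existence is unnecessary. Moreover, the uniform-in-time bound (indeed compactness) of the orbit in $B^{2-2/p}_{qp}(\Omega)\times\R^3$ for $t\ge \tau+\delta$, which you use in the interpolation of Step~6, is not a consequence of the instantaneous smoothing in Theorem~\ref{th:local_strong}(c) alone; it requires the compact-orbit smoothing result \cite[Theorem~5.7.1]{PrSi16} (uniform continuous dependence over the compact closure of the orbit in $X_{\gamma,\mu}$), which is exactly what the paper invokes. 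Finally, note that Theorem~\ref{th:local_strong}(b) requires $q_1\le q_2$, so the lift is carried out for $q\in[2,3)$; the range $q\in(1,2)$ is then recovered from boundedness of $\Omega$, as the paper remarks.
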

\begin{proof}
The existence of a weak solution $(v,\sa)$ corresponding to any initial condition $(v_0, {\sf a}_0)\in L_{2,\sigma}(\Omega)\times \R^3$ has been shown in \cite[Theorem 3.2.1]{Ma12}. 
 
By \cite[Proposition 1]{DGMZ16}, there exists $t_0>0$ such that 
\[\begin{split}
&v\in C([t_0,\infty);{_0H}^1_{2,\sigma}(\Omega))\cap L_{2,loc}([t_0,\infty);H^2_{2,\sigma}(\Omega))\cap H^1_{2,loc}([t_0,\infty);L_{2,\sigma}(\Omega)),
\\
&\sa\in C^1([t_0,\infty);\R^3). 
\end{split}\]
Moreover, there exists a pressure field $p\in L_{2,loc}([t_0,\infty);H^1_2(\Omega))$ such that the triple $(v,\sa,p)$ satisfies 
\eqref{problem} a.e. in $\Omega\times (t_0,\infty)$. In addition, 
\begin{equation}\label{eq:gradvto0}
\lim_{t\to \infty}|v(t)|_{H^1_2}=0.
\end{equation}
For $p\ge 2$ and $q\in[2,3)$, let
$\mu ={1}/{p}+ {3}/{2q}- {1}/{4},$
yielding
$H^1_2(\Omega)\hookrightarrow B^{2\mu-2/p}_{qp}(\Omega)$.
Next, we take $(v(t_0), {\sa}(t_0))\in {_0H}^1_{2,\sigma}(\Omega)\times \R^3$ as initial condition for 
a strong solution $(\tilde v,\tilde \sa)$ to \eqref{problem} on $[t_0, t_1]$, $t_1 \in (t_0, t_+)$, in
the class 
\[\begin{split}
&H^1_{p,\mu}((t_0,t_1);L_{q,\sigma}(\Omega)\times \R^3)\cap L_{p,\mu}((t_0,t_1);{_0H}^2_{q,\sigma}(\Omega)\times \R^3)
\\
&\cap H^1_{2}((t_0,t_1);L_{2,\sigma}(\Omega)\times \R^3)\cap L_{2}((t_0,t_1);{_0H}^2_{2,\sigma}(\Omega)\times \R^3), 
\end{split}\]
as from Theorem~\ref{th:local_strong}(b) (see also Remark~\ref{remark-2}(b)). 
Here we have used the notation
$$u\in L_{p,\mu}((t_0,T);X) \Leftrightarrow (t-t_0)^{1-\mu}u\in L_p((t_0,T);X)$$
and $u\in H^1_{p,\mu}((t_0,T);X) \Leftrightarrow u,\dot u\in L_{p,\mu}((t_0,T);X)$
for $0\le t_0<T\le \infty$ and $X$ a Banach space.

Since uniqueness of solutions holds in the above class, necessarily $v\equiv \tilde v$ and $\sa\equiv \tilde \sa$ on $[t_0, t_1]$, 
$t_1 \in (t_1, t_+)$.  Hence we conclude from  \eqref{momentum-conserved}, \eqref{eq:gradvto0}, 
and Theorem~\ref{th:local_strong}(d) that $t_+=\infty$.
Choosing $\tau>t_0$, the first assertion follows from the fact that $L_{p,\mu}((t_0, T);X)|_{[\tau,T]}\subset L_p((\tau,T);X)$
for any $\tau \in (t_0,T)$. Here we also note that the range $q\in (1,2)$ is admissible, since $\Omega$ is bounded.

Let us conclude our proof by showing \eqref{eq:bvto0}.
Let $\alpha\in [0,1)$ be given. By choosing $p$ sufficiently large we have
$B^{2-2/p}_{qp}(\Omega)\hookrightarrow  H^{2\alpha}_q(\Omega).$ 
Let $p$ be fixed so that this embedding holds.
Moreover, the embedding $H^1_2(\Omega)\hookrightarrow B^{2\mu-2/p}_{qp}(\Omega)$ holds with $\mu =1/p+ 3/2q-1/4.$ 
Hence, \eqref{eq:gradvto0} implies 
\begin{equation}
\label{to0-inB}
\lim_{t\to \infty}\norm{v(t)}_{B^{2\mu-2/p}_{qp}}=0.
\end{equation}
We can now conclude from  \eqref{momentum-conserved} 
that $(v,{\sf a})([\tau,\infty))$ is relatively compact in 
\begin{equation*}
{_0B}^{2\bar \mu-2/p}_{qp,\sigma}(\Omega)\times \R^3, \quad\text{for any } \bar\mu\in [\mu_{\rm crit},\mu), 
\end{equation*}
where $ \mu_{\rm crit}=1/p+3/2q-1/2.$
Theorem 5.7.1 in \cite{PrSi16} shows that $(v,{\sf a})([\tau+1,\infty))$ is 
compact, and hence also bounded, in $B^{2-2/p}_{qp}(\Omega)\times \R^3.$ 
Choosing $\theta$ such that
\[
( B^{2\mu-2/p}_{qp}(\Omega),B^{2-2/p}_{qp}(\Omega))_{\theta,2}= B^{2\alpha}_{q2}(\Omega)\hookrightarrow H^{2\alpha}_q(\Omega)
\]
yields \eqref{eq:bvto0} by interpolation.
\end{proof} 
In the following, we state our main result about the long-time behavior of solutions to \eqref{problem}. 
\begin{theorem}\label{th:convergence}
Let $(v,\sa)$ be a weak solution corresponding to  an initial condition $(v_0,\sa_0)\in L_{2,\sigma}(\Omega)\times \R^3$ with $\sa_0\ne 0$. 
Then there exists $\bar \sa\in \R^3$ with $|\II \sa_0|= |\II \bar\sa|$, such that 
\begin{equation*}
(v(t),\sa(t))\to (0, \bar\sa)\quad\text{in} \;\; H^{2\alpha}_q(\Omega)\times \R^3, \;\; \text{for any }\alpha\in[0,1),\;  q\in (1,3), 
\end{equation*}
at an exponential rate.
\end{theorem}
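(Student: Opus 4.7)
The plan is to combine the improved regularity of weak solutions (Theorem~\ref{th:weak-strong}) with the nonlinear stability trichotomy (Theorem~\ref{th:stability-instability}) via a LaSalle-type analysis of the $\omega$-limit set, and then appeal to exponential convergence in the neighborhood of a normally stable equilibrium.

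First, Theorem~\ref{th:weak-strong} yields $\tau>0$ past which $(v,\sa)$ is a strong solution in the class of Theorem~\ref{th:local_strong}(b), the orbit $\mathcal{O}:=\{(v(t),\sa(t)):t\ge\tau\}$ is relatively compact in $B^{2-2/p}_{qp}(\Omega)\times\R^3$, and $v(t)\to 0$ in $H^{2\alpha}_q(\Omega)$. Conservation of angular momentum together with $\sa_0\ne 0$ gives $|\II\sa(t)|=|\II\sa_0|>0$, and Proposition~\ref{prop:equilibria}(b) gives $ {\sf E}(v(t),\sa(t))\searrow E_\infty>0$. The $\omega$-limit set $\Omega_+$ is then non-empty, compact, connected, and---by the strict Lyapunov property plus LaSalle---contained in $\cE$. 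Every $(0,\sa_*)\in\Omega_+$ satisfies $\sa_*\in {\sf N}(\lambda_*-\II)\setminus\{0\}$, and the identity
\[
E_\infty={\sf E}(0,\sa_*)=\tfrac{1}{2}\lambda_*|\sa_*|^2=\frac{|\II\sa_0|^2}{2\lambda_*}
\]
forces $\lambda_*=|\II\sa_0|^2/(2E_\infty)$, a single eigenvalue of $\II$ common to all $\omega$-limit points.

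Next, I would show that $\lambda_*=\max\{\lambda_1,\lambda_2,\lambda_3\}$; this step will be the main obstacle. Suppose for contradiction $\lambda_*<\max\{\lambda_1,\lambda_2,\lambda_3\}$. By Theorem~\ref{th:spectrum}(e)--(f), every $u_*\in\Omega_+$ is then normally hyperbolic, with a nontrivial unstable subspace of $-L_*$. Appealing to the local center--stable--unstable manifold decomposition near such $u_*$ (Chapter~5 of~\cite{PrSi16}), the center--stable manifold has positive codimension and every nearby trajectory not lying on it must exit a fixed neighborhood of $u_*$ along the unstable direction. Combining this with Proposition~\ref{prop:equilibria}(e)---which guarantees that on the constraint surface $\{{\sf M}={\sf M}_0\}$ there are states of energy strictly less than $E_\infty$ arbitrarily close to $u_*$---one argues that the orbit $\mathcal{O}$, which must eventually lie in any tubular neighborhood of $\Omega_+$, either escapes along the unstable manifold (contradicting $\mathcal{O}\to\Omega_+$) or is driven by the Lyapunov gradient below the level $E_\infty$ (contradicting ${\sf E}\searrow E_\infty$). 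Hence $\lambda_*=\max\{\lambda_1,\lambda_2,\lambda_3\}$, and every $u_*\in\Omega_+$ is normally stable by Theorem~\ref{th:spectrum}(d).

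Finally, I would pick any $u_*=(0,\sa_*)\in\Omega_+$ together with a time $t^*\ge\tau$ large enough that $(v(t^*),\sa(t^*))$ lies in the ball of attraction of Theorem~\ref{th:stability-instability}(i); such $t^*$ exists because the orbit accumulates on $u_*$ in the $B^{2-2/p}_{qp}$-topology. Reinitialising the Cauchy problem at $t^*$ and invoking Theorem~\ref{th:stability-instability}(i) produces $(0,\bar\sa)\in\cE$ with $(v(t),\sa(t))\to(0,\bar\sa)$ at exponential rate in $H^{2\alpha}_q(\Omega)\times\R^3$ for every $\alpha\in[0,1)$ and $q\in(1,3)$; uniqueness of strong solutions identifies this continuation with $(v,\sa)$. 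Conservation of angular momentum yields $|\II\bar\sa|=|\II\sa_0|$, closing the argument.
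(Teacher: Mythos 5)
There is a genuine gap, and it sits exactly where you anticipated: the step claiming $\lambda_*=\max\{\lambda_1,\lambda_2,\lambda_3\}$. That claim is not only unproved by your argument, it is false in general, and the theorem does not assert it: the statement only says the solution converges to \emph{some} equilibrium $(0,\bar\sa)$ with $|\II\bar\sa|=|\II\sa_0|$. For instance, the data $(v_0,\sa_0)=(0,\sa_*)$ with $\sa_*$ an eigenvector of $\II$ for a non-maximal eigenvalue produces a stationary weak solution sitting forever at a normally hyperbolic equilibrium; nothing excludes such limits. Your contradiction argument does not work: a trajectory approaching a normally hyperbolic equilibrium may do so along its stable (or center-stable) manifold, so "every nearby trajectory must exit along the unstable direction" is not true, and Proposition~\ref{prop:equilibria}(e) only describes the energy landscape near the critical point --- it does not force the trajectory's own energy, which decreases solely through its viscous dissipation, below the level $E_\infty$. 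Consequently the final step also breaks down, because you invoke Theorem~\ref{th:stability-instability}(i), which is available only when the limiting equilibrium is normally stable.

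The missing ingredient is the convergence theorem for the generalized principle of linearized stability, \cite[Theorem 5.7.2]{PrSi16}, which is what the paper uses. Once Theorem~\ref{th:weak-strong} gives a globally defined strong solution with relatively compact orbit in ${_0B}^{2-2/p}_{qp,\sigma}(\Omega)\times\R^3$, ${\sf E}$ is a strict Lyapunov functional (Proposition~\ref{prop:equilibria}(b)), and $|\II\sa(t)|=|\II\sa_0|\neq 0$ forces every $\omega$-limit point to be a \emph{nontrivial} equilibrium, Theorem~\ref{th:spectrum} says each such equilibrium is either normally stable or normally hyperbolic; \cite[Theorem 5.7.2]{PrSi16} then yields convergence at an exponential rate to an equilibrium in $X_{\gamma,1}$ in \emph{both} cases (in the normally hyperbolic case precisely because the solution exists globally and its energy cannot increase), and the embedding $B^{2-2/p}_{qp}(\Omega)\hookrightarrow H^{2\alpha}_q(\Omega)$ for $p$ large transfers this to the stated topology. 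Your first paragraph (regularization, compactness, $\omega$-limit set contained in $\cE$, common eigenvalue $\lambda_*$) is sound and consistent with the paper's setup, but the route through a stability/instability dichotomy cannot be repaired without the quoted abstract convergence theorem or an equivalent argument handling the normally hyperbolic case.
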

\begin{proof}

We recall that, by Proposition \ref{prop:equilibria}(b),  ${\sf E}$  is a strict Lyapunov function for \eqref{eq:evolution}. 
Moreover, the nonzero equilibria have been characterized in Theorem \ref{th:spectrum} to be either normally stable or normally hyperbolic. Now, fix $\alpha\in (0,1)$. As in the proof of Theorem \ref{th:stability-instability}, we choose $p$ large enough so that
$$B^{2-2/p}_{qp}(\Omega)\hookrightarrow H^{2\beta}_q(\Omega)\cap H^{2\alpha}_q(\Omega),
\quad 1/p<1-\beta.$$ Hence $X_{\gamma,1}=(X_0,X_1)_{1-1/p,p}\hookrightarrow X_\beta,$
and $F\in C^{1-}(X_{\gamma,1},X_0)$. 
We also recall that $L$ has the property of maximal $L_p$-regularity. 
By Theorem \ref{th:weak-strong}, we know that $v(t)\in {_0B}^{2-2/p}_{pq,\sigma}(\Omega)$ for all $t\ge\tau$, 
and exponential convergence to an equilibrium, in the stated topology, follows from \cite[Theorem 5.7.2]{PrSi16} and the above embedding. 
\end{proof}

\begin{remark}{\rm
Suppose $\sa_0=0$. By Theorem \ref{th:weak-strong}, any weak solution $(v(t),\sa(t))$ corresponding to $(v_0,0)$ exists globally in time.  
Moreover, by conservation of total angular momentum \eqref{momentum-conserved}, necessarily $\sa(t)= 0$ for all times. 
Then, again by Theorem \ref{th:weak-strong}, $v(t)$ is a strong solution of 
\[
\dot v+A_1v=f(v),\quad v(0)=v(\tau),
\]
where  $A_1$ has been defined in \eqref{eq:A1} and $f(v)=(I+C)\PP(-v\cdot \nabla v+2\;\omega\times v)$. By Proposition \ref{pro:L-calculus} and \cite[Corollary 2.2 (iii)]{PrSiWi18} it follows that the rate of convergence in \eqref{eq:bvto0} is in fact exponential. A similar result has been obtained, with a completely different proof, in \cite[Theorem 5.6]{SiTa}. }
\end{remark}

\bigskip\noindent
{\bf Acknowledgment:}
The second and third author would like to thank Martin and Burga Simonett for their hospitality
while visiting Lohn, where part of this manuscript was written.

\end{document}